\newtheorem {theorem}{Theorem}[section]
\newtheorem {corollary}{Corollary}[section]
\newtheorem {lemma}{Lemma}[section]
\newtheorem {example}{Example}[section]
\newtheorem {definition}{Definition}[section]
\newtheorem {remark}{Remark}[section]
\def\ar{a\kern-.370em\raise.16ex\hbox{\char95\kern-0.53ex\char'47}\kern.05em}
\def\ees{{\accent"5E e}\kern-.385em\raise.2ex\hbox{\char'23}\kern-.08em}
\def\eex{{\accent"5E e}\kern-.470em\raise.3ex\hbox{\char'176}}
\def\AR{A\kern-.46em\raise.80ex\hbox{\char95\kern-0.53ex\char'47}\kern.13em}
\def\EES{{\accent"5E E}\kern-.5em\raise.8ex\hbox{\char'23 }}
\def\EEX{{\accent"5E E}\kern-.60em\raise.9ex\hbox{\char'176}\kern.1em}
\def\ow{o\kern-.42em\raise.82ex\hbox{
  \vrule width .12em height .0ex depth .075ex \kern-0.16em \char'56}\kern-.07em}
\def\OW{O\kern-.460em\raise1.36ex\hbox{
\vrule width .13em height .0ex depth .075ex \kern-0.16em \char'56}\kern-.07em}
\def\UW{U\kern-.42em\raise1.36ex\hbox{
\vrule width .13em height .0ex depth .075ex \kern-0.16em \char'56}\kern-.07em}
\def\DD{D\kern-.7em\raise0.4ex\hbox{\char '55}\kern.33em}
\title{Local minimizers of semi-algebraic functions from the viewpoint of tangencies}
\author{TI\EES N-S\OW N PH\d{A}M}
\address{Department of Mathematics, University of Dalat, 1 Phu Dong Thien Vuong, Dalat, Vietnam}
\email{sonpt@dlu.edu.vn}
\date{\today}
\subjclass[2010]{49J53~$\cdot$~14P10~$\cdot$~65K10~$\cdot$~49J52~$\cdot$~26D10}
\keywords{Local minimizers, \L ojasiewicz gradient inequality, Optimality conditions, Semi-algebraic, Sharp minimality, Strong metric subregularity, Tangencies}
\thanks{The authors are partially supported by Vietnam National Foundation for Science and Technology Development (NAFOSTED), grant 101.04-2019.302}
\begin{document}

\begin{abstract}
Consider a semi-algebraic function $f\colon\mathbb{R}^n \to {\mathbb{R}},$ which is continuous around a point $\bar{x} \in \mathbb{R}^n.$ Using the so--called {\em tangency variety} of $f$ at $\bar{x},$ we first provide necessary and sufficient conditions for $\bar{x}$ to be a local minimizer of $f,$ and then in the case where $\bar{x}$ is an isolated local minimizer of $f,$ we define a ``tangency exponent'' $\alpha_* > 0$ so that for any $\alpha \in \mathbb{R}$
the following four conditions are always equivalent:
\begin{enumerate}
\item[{(i)}] the inequality $\alpha \ge \alpha_*$ holds;
\item[{(ii)}] the point $\bar{x}$ is an $\alpha$th order sharp local minimizer of $f;$ 
\item[{(iii)}] the limiting subdifferential $\partial f$ of $f$ is $(\alpha - 1)$th order strongly metrically subregular at $\bar{x}$ for $0;$ and
\item[{(iv)}] the function $f$ satisfies the \L ojaseiwcz gradient inequality at $\bar{x}$ with the exponent $1 - \frac{1}{\alpha}.$
\end{enumerate}
Besides, we also present a counterexample to a conjecture posed by Drusvyatskiy and Ioffe [Math. Program. Ser. A, 153(2):635--653, 2015].
\end{abstract}
\maketitle

\section{Introduction}

Optimality conditions form the foundations of mathematical programming both theoretically and computationally (see, for example, \cite{Bertsekas1999, Bonnas2000, Clarke1990, Lasserre2015, Mordukhovich2006, Ruszczynski2006}).

To motivate the discussion, consider a function $f \colon \mathbb{R}^n \to {\mathbb{R}},$ which is continuous around a point $\bar{x} \in \mathbb{R}^n.$ It is well known that if $\bar{x}$ is a local minimizer of $f,$ then $0$ belongs to the limiting subdifferential $\partial f(\bar{x})$ of $f$ at $\bar{x}$ (see the next section for definitions and notations). The converse is known to be true for convex functions, but it is false in the general case.

On the other hand, when $f$ is a polynomial function, Barone-Netto defined in \cite{BaroneNetto1984} a finite family of smooth one-variable functions that can be used to test whether $\bar{x}$ is a local minimizer of $f.$ Inspired by this result, under the assumption that $f$ is a semi-algebraic function, we construct a finite sequence of real numbers, say, $\{a_1, \ldots, a_p\},$ so that the following statements hold:
\begin{itemize}
\item the point $\bar{x}$ is a local minimizer of $f$ if and only if $a_k \ge 0$ for all $k = 1, \ldots, p;$
\item the point $\bar{x}$ is an isolated local minimizer of $f$ if and only if $a_k > 0$ for all $k = 1, \ldots, p.$
\end{itemize}

It is essential to mention that there is no gap between these necessary and sufficient conditions. Furthermore, the sequence $\{a_1, \ldots, a_p\}$ does not invoke any second-order subdifferential of $f.$ In fact, as we can see in Sections~\ref{Section3} and \ref{Section4}, this sequence is constructed based on the so-called {\em tangency variety} of $f$ at $\bar{x}$ which is defined purely in subdifferential terms. Moreover, in the case where $\bar{x}$ is an isolated local minimizer of $f,$ we determine a ``tangency exponent'' $\alpha_* >0$ such that for all $\alpha \in \mathbb{R}$ the following two statements are equivalent:
\begin{itemize}
\item the inequality $\alpha \ge \alpha_*$ is valid;
\item the point $\bar{x}$ is an {\em $\alpha$th order sharp local minimizer} of $f.$ 
\end{itemize}

The latter means that there exist constants $c > 0$ and $\epsilon > 0$ such that
\begin{eqnarray*}
f(x) &\ge& f(\bar{x}) + c \|x - \bar{x}\|^\alpha \quad \textrm{ for all } \quad x \in \mathbb{B}_{\epsilon}(\bar{x}).
\end{eqnarray*}
It is well-known that second-order growth conditions (i.e., the case of $\alpha = 2$) play an important role in nonlinear optimization, both for convergence analysis of algorithms and for perturbation theory (see, for example, \cite{Bonnas2000, Nocedal2006, Ruszczynski2006}). Under the assumptions that $f$ is convex and $\bar{x}$ is a (necessarily isolated) local minimizer of $f,$ Arag\'on-Artacho and Geoffroy~\cite{AragonArtacho2008} first proved that $\bar{x}$ is a second order sharp local minimizer of $f$ if and only if the limiting subdifferential $\partial f$ is {\em strongly metrically subregular} at $\bar{x}$ for $0$ in the sense that there exist constants $c > 0$ and $\epsilon > 0$ such that
\begin{eqnarray} \label{Eq01}
\mathfrak{m}_f(x) &\ge& c\, \|x - \bar{x}\| \quad \textrm{ for all } \quad x \in \mathbb{B}_{\epsilon}(\bar{x}),
\end{eqnarray}
where $\mathfrak{m}_f(x)$ denotes the minimal norm of subgradients $v \in \partial f(x).$ Afterwards, relaxing the convexity of $f$ to the assumption that $f$ is semi-algebraic, Drusvyatskiy and Ioffe \cite{Drusvyatskiy2015} proved that the corresponding equivalence still holds. Furthermore, they show that if $\bar{x}$ is a (not necessarily isolated) local minimizer, the existence of constants $c > 0$ and $\epsilon > 0$ such that
\begin{eqnarray*}
\mathfrak{m}_f(x) &\ge& c \, \mathrm{dist}\big(x, (\partial f)^{-1}(0)\big) \quad \textrm{ for all } \quad x \in \mathbb{B}_{\epsilon}(\bar{x})
\end{eqnarray*}
implies the existence of constants $c' > 0$ and $\epsilon' > 0$ satisfying
\begin{eqnarray*}
f(x) &\ge& f(\bar{x}) + c'\, \mathrm{dist}\big(x, (\partial f)^{-1}(0)\big)^{2} \quad \textrm{ for all } \quad x \in \mathbb{B}_{\epsilon'}(\bar{x}),
\end{eqnarray*}
where $\mathrm{dist}\big(x, (\partial f)^{-1}(0)\big)$ denotes for the Euclidean distance from $x$ to $(\partial f)^{-1}(0).$ 
In \cite[Remark~3.4]{Drusvyatskiy2014},  the authors conjecture that the converse is also true. We provide a counterexample to this conjecture; see Example~\ref{Counterexample}.

Replacing $\| x - \bar{x}\|$ in \eqref{Eq01} by $\| x - \bar{x}\|^\beta$ with some constant $\beta \in \mathbb{R},$ one can consider the following {\em $\beta$th order strong metric subregularity} of $\partial f$ at $\bar{x}$ for $0$: there exist constants $c > 0$ and $\epsilon > 0$ such that
\begin{eqnarray*}
\mathfrak{m}_f(x) &\ge& c\, \|x - \bar{x}\|^{\beta} \quad \textrm{ for all } \quad x \in \mathbb{B}_{\epsilon}(\bar{x})  \setminus \{\bar{x}\}.
\end{eqnarray*}
(Note that we exclude $\bar{x}$ here because $\beta$ may be negative; for example, the limiting subdifferential of the continuous function $\mathbb{R} \rightarrow \mathbb{R}, x \mapsto \sqrt{|x|},$ is strongly metrically subregular of order $\beta = -\frac{1}{2}$ at $\bar{x} = 0$ for $0$). Metric regularity and (strong) metric subregularity are becoming an important and active area of research in variational analysis and optimization theory. For more details, we refer the reader to the books \cite{Dontchev2009, Klatte2002, Mordukhovich2006} and the survey 
 \cite{Ioffe2016-1, Ioffe2016-2} with references therein. Recently, under the assumptions that $f$ is convex, $\bar{x}$ is a local minimizer of $f,$ and that $\alpha > 1,$ Zheng and Ng \cite{Zheng2015} and, independently, Mordukhovich and Ouyang \cite{Mordukhovich2015}  showed that $\bar{x}$ is an $\alpha$th order sharp local minimizer of $f$ if and only if the limiting subdifferential $\partial f$ is $(\alpha - 1)$th order strong metric subregularity at $\bar{x}$ for $0.$

In a difference line of development, Bolte, Daniilidis, and Lewis \cite{Bolte2006} showed that if $f$ is subanalytic and $\bar{x}$ is a critical point of $f$ (i.e., $\frak{m}_f(\bar{x}) = 0$), then $f$ satisfies the {\em \L ojaseiwcz gradient inequality} at $\bar{x}$ with an exponent $\theta \in [0, 1),$ which means that there exist constants $c > 0$ and $\epsilon > 0$ such that
\begin{eqnarray*}
\mathfrak{m}_f(x) &\ge& c\, |f(x) - f(\bar{x})|^{\theta}  \quad \textrm{ for all } \quad x \in \mathbb{B}_{\epsilon}(\bar{x}) \setminus \{\bar{x}\}.
\end{eqnarray*}
It is worth emphasizing that the convergence behavior of many first-order methods can be understood using the \L ojasiewicz gradient inequality and its associated exponent; see, for example, \cite{Absil2005, Attouch2010, Attouch2013, Bolte2014, Bolte2017, Frankel2015, Li2015, Li2016, Li2018-2}.

Motivated by the aforementioned works, we show that if $f$ is semi-algebraic and $\bar{x}$ is an isolated local minimizer of $f,$ then for any $\alpha \ge \alpha_*,$ the following statements are equivalent:
\begin{itemize}
\item The point $\bar{x}$ is an $\alpha$th order sharp local minimizer of $f.$
\item The limiting subdifferential $\partial f$ is $(\alpha - 1)$th order strongly metrically subregular at $\bar{x}$ for $0.$
\item The function $f$ satisfies the \L ojaseiwcz gradient inequality at $\bar{x}$ with the exponent $1 - \frac{1}{\alpha}.$ 
\end{itemize}

Note that, for a special value of $\alpha,$ these three equivalences were proved by Gwo\'zdziewicz \cite{Gwozdziewicz1999} (with $f$ being an analytic function) and by the author \cite{PHAMTS2011} (with $f$ being a continuous subanalytic function).

To be concrete, we study only semi-algebraic functions. Analogous results, with essentially identical proofs, also hold for  functions definable in a polynomially bounded o-minimal structure (see \cite{Dries1996} for more on the subject).  However, to lighten the exposition, we do not pursue this extension here.

The rest of this paper is organized as follows. Section~\ref{SectionPreliminary} contains some preliminaries from variational analysis and semi-algebraic geometry widely used in the proofs of the main results given below. The tangency variety, which plays an important role in this study, is presented in Section~\ref{Section3}.
The main results are given in Section~\ref{Section4}. Finally, several examples are provided in Section~\ref{Section5}.

\section{Preliminaries} \label{SectionPreliminary}

Throughout this work we shall consider the Euclidean vector space ${\Bbb R}^n$ endowed with its canonical scalar product $\langle \cdot, \cdot \rangle,$ and we shall denote its associated norm $\| \cdot \|.$ The closed ball (resp., the sphere) centered at $\bar{x} \in \mathbb{R}^n$ of radius $\epsilon$ will be denoted by $\mathbb{B}_{\epsilon}(\bar{x})$ (resp., $\mathbb{S}_{\epsilon}(\bar{x})$). When $\bar{x}$ is the origin of $\mathbb{R}^n$ we write $\mathbb{B}_{\epsilon}$ instead of $\mathbb{B}_{\epsilon}(\bar{x}).$ 

For a function $f \colon \mathbb{R}^n \rightarrow {\mathbb{R}},$ we define the {\em epigraph} of $f$ to be
\begin{eqnarray*}
\mathrm{epi} f &:=& \{ (x, y) \in \mathbb{R}^n \times \mathbb{R} \ | \ f(x) \le y \}.
\end{eqnarray*}
A function $f  \colon \mathbb{R}^n \rightarrow {\mathbb{R}}$ is said to be {\em lower semi-continuous} if for each $x \in \mathbb{R}^n$ the inequality $\liminf_{x' \to {x}} f(x') \ge f({x})$ holds.

\subsection{Normals and subdifferentials}
Here we recall the notions of the normal cones to sets and the subdifferentials of real-valued functions used in this paper. The reader is referred to \cite{Mordukhovich2006, Mordukhovich2018, Rockafellar1998} for more details.

\begin{definition}{\rm Consider a set $\Omega\subset\mathbb{R}^n$ and a point ${x} \in \Omega.$
\begin{enumerate}
\item[(i)] The {\em regular normal cone} (known also as the {\em prenormal} or {\em Fr\'echet normal cone}) $\widehat{N}({x}; \Omega)$ to
$\Omega$ at ${x}$ consists of all vectors $v\in\mathbb{R}^n$ satisfying
\begin{eqnarray*}
\langle v, x' - {x} \rangle &\le& o(\|x' -  {x}\|) \quad \textrm{ as } \quad x' \to {x} \quad \textrm{ with } \quad x' \in \Omega.
\end{eqnarray*}

\item[(ii)] The {\em limiting normal cone} (known also as the {\em basic} or {\em Mordukhovich normal cone}) $N({x}; \Omega)$ to $\Omega$ at ${x}$ consists of all vectors $v \in \mathbb{R}^n$ such that there are sequences $x^k \to {x}$ with $x^k \in \Omega$ and $v^k \rightarrow v$ with $v^k \in \widehat N(x^k; \Omega).$
\end{enumerate}
}\end{definition}

If $\Omega$ is a manifold of class $C^1,$ then for every point $x \in \Omega,$ the normal cones $\widehat{N}({x}; \Omega)$ and $N({x}; \Omega)$ are equal to the normal space to $\Omega$ at ${x}$ in the sense of differential geometry; see \cite[Example~6.8]{Rockafellar1998}.

\begin{definition}{\rm
Consider a function $f\colon\mathbb{R}^n \to {\mathbb{R}}$ and a point ${x} \in \mathbb{R}^n.$
\begin{enumerate}
\item[(i)] The {\em limiting} and {\em horizon subdifferentials} of $f$ at ${x}$ are defined, respectively, by
\begin{eqnarray*}
\partial f({x}) &:=& \big\{v\in\mathbb{R}^n\;\big|\;(v,-1)\in N\big(({x}, f({x}));\mathrm{epi} f\big)\big\}, \\
\partial^\infty f({x}) &:=& \big\{v\in\mathbb{R}^n\;\big|\;(v,0)\in N\big(({x}, f({x}));\mathrm{epi} f \big)\big\}.
\end{eqnarray*}

\item[(ii)] The {\em nonsmooth slope} of $f$ at ${x}$ is defined by
\begin{eqnarray*}
{\frak m}_f({x}) &:=& \inf \{ \|v\| \ | \ v \in {\partial} f({x}) \}.
\end{eqnarray*}
By definition, ${\frak m}_f({x}) = + \infty$ whenever ${\partial} f({x}) = \emptyset.$
\end{enumerate}
}\end{definition}

In \cite{Mordukhovich2006, Mordukhovich2018, Rockafellar1998} the reader can find equivalent analytic descriptions of the limiting subdifferential $\partial f({x})$ and comprehensive studies of it and related constructions. For convex $f,$ this subdifferential coincides with the convex subdifferential. Furthermore, if the function $f$ is of class $C^1,$ then $\partial f({x}) = \{\nabla f({x})\}$ and so ${\frak m}_f(x) = \|\nabla f(x) \|.$ The horizon subdifferential $\partial^\infty f({x})$ plays an entirely different role--it detects horizontal ``normal'' to the epigraph--and it plays a decisive role in subdifferential calculus; see \cite[Corollary~10.9]{Rockafellar1998} for more details.

\begin{theorem}[Fermat rule]\label{FermatRule}
Consider a lower semi-continuous function $f \colon\mathbb{R}^n \to {\mathbb{R}}$ and a closed set $\Omega \subset \mathbb{R}^n.$ If $\bar{x} \in \Omega$ is a local minimizer of $f$ on $\Omega$ and the qualification condition
\begin{eqnarray*}
\partial^\infty f(\bar{x}) \cap N(\bar{x}; \Omega) &=& \{0\}
\end{eqnarray*}
is valid, then the inclusion $0 \in \partial f(\bar{x}) + N(\bar{x}; \Omega)$ holds.
\end{theorem}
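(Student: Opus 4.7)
The approach I would take is the classical reduction from constrained to unconstrained optimization via the indicator function of $\Omega$. Specifically, I would define $\delta_\Omega \colon \mathbb{R}^n \to \mathbb{R} \cup \{+\infty\}$ by $\delta_\Omega(x) = 0$ if $x \in \Omega$ and $\delta_\Omega(x) = +\infty$ otherwise. Since $\Omega$ is closed, $\delta_\Omega$ is lsc, and the hypothesis that $\bar{x}$ is a local minimizer of $f$ on $\Omega$ translates immediately into the statement that $\bar{x}$ is a local (unconstrained) minimizer of the sum $g := f + \delta_\Omega$ on $\mathbb{R}^n$. This reformulation is the cornerstone of the whole argument.

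Next I would invoke the unconstrained Fermat rule, which asserts that if $\bar{x}$ is a local minimizer of an lsc function $g$, then $0 \in \widehat{\partial} g(\bar{x}) \subset \partial g(\bar{x})$. The inclusion $0 \in \widehat{\partial} g(\bar{x})$ follows directly from the definition of the regular normal cone applied to the epigraph of $g$: for any $x'$ near $\bar{x}$ with $g(x') < +\infty$ one has $g(x') \ge g(\bar{x})$, which means the pair $(0, -1)$ lies in $\widehat{N}((\bar{x}, g(\bar{x})); \mathrm{epi}\, g)$. Consequently, $0 \in \partial g(\bar{x}) = \partial(f + \delta_\Omega)(\bar{x})$.

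The final step is to apply the limiting subdifferential sum rule (see \cite[Theorem~10.9]{Rockafellar1998} or \cite[Theorem~3.36]{Mordukhovich2006}), which under an appropriate qualification condition yields
\begin{eqnarray*}
\partial(f + \delta_\Omega)(\bar{x}) &\subset& \partial f(\bar{x}) + \partial \delta_\Omega(\bar{x}).
\end{eqnarray*}
Here I would use two standard identifications: $\partial \delta_\Omega(\bar{x}) = N(\bar{x}; \Omega)$ and $\partial^\infty \delta_\Omega(\bar{x}) = N(\bar{x}; \Omega)$. The qualification hypothesis in the theorem, $\partial^\infty f(\bar{x}) \cap N(\bar{x}; \Omega) = \{0\}$, then matches exactly (up to the sign convention inherited from the cone structure of $N(\bar{x};\Omega)$) the hypothesis required by the sum rule. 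Combining the three ingredients gives $0 \in \partial f(\bar{x}) + N(\bar{x}; \Omega)$, which is the desired conclusion.

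The main obstacle here is not the reduction or the Fermat rule, both of which are essentially formal, but the justification of the sum rule, whose proof requires a delicate limiting argument in the product space $\mathbb{R}^n \times \mathbb{R}$ using sequential characterizations of the limiting normal cone to $\mathrm{epi}(f+\delta_\Omega)$ in terms of those to $\mathrm{epi}\, f$ and $\mathrm{epi}\, \delta_\Omega$. Since this is a preliminary section and the sum rule is standard, I would simply cite it rather than reprove it, and focus the exposition on the clean three-step reduction above.
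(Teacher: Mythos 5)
The paper never proves this theorem: it is stated in the preliminaries as a known result imported from \cite{Mordukhovich2006,Rockafellar1998} (essentially Theorem~8.15 combined with the sum rule of Corollary~10.9 / Exercise~10.10 in \cite{Rockafellar1998}), so there is no in-paper argument to compare against. Your three-step reduction --- pass to the unconstrained problem for $g=f+\delta_\Omega$, invoke the elementary Fermat rule $0\in\widehat{\partial}g(\bar{x})\subset\partial g(\bar{x})$, then split $\partial(f+\delta_\Omega)(\bar{x})\subset\partial f(\bar{x})+\partial\delta_\Omega(\bar{x})$ using the identifications $\partial\delta_\Omega(\bar{x})=\partial^\infty\delta_\Omega(\bar{x})=N(\bar{x};\Omega)$ --- is exactly that standard proof, and it is correct.

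The one point you should not gloss over is the sign in the qualification condition. The sum rule requires that the only vectors $v_1\in\partial^\infty f(\bar{x})$ and $v_2\in\partial^\infty\delta_\Omega(\bar{x})=N(\bar{x};\Omega)$ with $v_1+v_2=0$ be $v_1=v_2=0$, i.e.\ $\partial^\infty f(\bar{x})\cap\bigl(-N(\bar{x};\Omega)\bigr)=\{0\}$. The limiting normal cone is a cone but is not symmetric in general, so this is not literally the hypothesis $\partial^\infty f(\bar{x})\cap N(\bar{x};\Omega)=\{0\}$ printed in the theorem; calling the discrepancy a ``sign convention'' is too quick. In the only place the paper applies the theorem, $\Omega$ is a sphere, a $C^1$ manifold whose limiting normal cone is a linear subspace, so the two conditions coincide and nothing downstream is affected; but as a freestanding proof you should either state the qualification with the minus sign (as in \cite[Theorem~8.15]{Rockafellar1998}) or explain why the version without it suffices in the setting at hand.
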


\subsection{Semi-algebraic geometry}

Now, we recall some notions and results of semi-algebraic geometry, which can be found in \cite{Bochnak1998, Dries1996}.

\begin{definition}{\rm
A subset $S$ of $\mathbb{R}^n$ is called {\em semi-algebraic} if it is a finite union of sets of the form
$$\{x \in \mathbb{R}^n \ | \  f_i(x) = 0, \ i = 1, \ldots, k; f_i(x) > 0, \ i = k + 1, \ldots, p\},$$
where all $f_{i}$ are polynomials.
In other words, $S$ is a union of finitely many sets, each defined by finitely many polynomial equalities and inequalities.
A function $f \colon S \rightarrow {\mathbb{R}}$ is said to be {\em semi-algebraic} if its graph
\begin{eqnarray*}
\{ (x, y) \in S \times \mathbb{R} \ | \ y =  f(x) \}
\end{eqnarray*}
is a semi-algebraic set.
}\end{definition}

A major fact concerning the class of semi-algebraic sets is its stability under linear projections (see, for example, \cite{Bochnak1998}).

\begin{theorem}[Tarski--Seidenberg Theorem] \label{TarskiSeidenbergTheorem}
The image of any semi-algebraic set $S \subset \mathbb{R}^n$ under a projection to any linear subspace of $\mathbb{R}^n$ is a semi-algebraic set.
\end{theorem}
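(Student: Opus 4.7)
The standard route is via quantifier elimination for the first-order theory of $(\mathbb{R}, +, \cdot, <, 0, 1)$. First I would reduce to the simplest possible projection. Any linear projection $\mathbb{R}^n \to V$ onto a linear subspace factors, after an orthogonal change of coordinates (which, being polynomial, sends semi-algebraic sets to semi-algebraic sets), into successive projections that forget one coordinate at a time. It therefore suffices to prove the following: if $S \subset \mathbb{R}^{n+1}$ is semi-algebraic and $\pi \colon \mathbb{R}^{n+1} \to \mathbb{R}^n$ drops the last coordinate, then $\pi(S)$ is semi-algebraic.

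Second, since $\pi$ commutes with finite unions and the class of semi-algebraic sets is closed under finite unions, I may assume $S$ is a single basic semi-algebraic set
$$S \;=\; \{(x,t) \in \mathbb{R}^n \times \mathbb{R} : f_i(x,t) = 0,\ i \le k;\ f_i(x,t) > 0,\ k < i \le p\},$$
with $f_1,\ldots,f_p \in \mathbb{R}[x_1,\ldots,x_n,t]$. Writing each $f_i$ as a polynomial in $t$ with coefficients in $\mathbb{R}[x_1,\ldots,x_n]$, I need to express, in a quantifier-free semi-algebraic way in $x$, the assertion that the univariate polynomials $f_1(x,\cdot),\ldots,f_p(x,\cdot)$ admit a common $t \in \mathbb{R}$ realizing the prescribed sign condition.

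The central technical step is the sign-determination lemma for parametrized univariate polynomials: the locus of parameters $x \in \mathbb{R}^n$ for which a given sign pattern on $(f_1(x,t),\ldots,f_p(x,t))$ is realized by some real $t$ is semi-algebraic in $x$. I would establish this via Thom's lemma (or equivalently a Sturm-type signed remainder sequence, or Collins' cylindrical algebraic decomposition). The idea is to stratify $\mathbb{R}^n$ into finitely many semi-algebraic strata, each defined by fixing the signs of the leading $t$-coefficients, resultants, discriminants and subresultants of the $f_i$ and their successive $t$-derivatives. On each stratum every $f_i(x,\cdot)$ has a fixed $t$-degree, a fixed number of distinct real roots, and a fixed interleaving pattern with the other $f_j(x,\cdot)$; hence the existence of the desired $t$ becomes a quantifier-free Boolean combination of sign conditions on the structural polynomials. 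Taking the union over the strata on which the condition is satisfied yields $\pi(S)$ as a semi-algebraic set.

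The main obstacle is precisely this parametric sign-determination: one must control what happens as the leading $t$-coefficients of the $f_i$ vanish (so the degree in $t$ drops), as roots of a single $f_i(x,\cdot)$ collide (multiplicity changes, detected by the discriminant), and as roots of different $f_i(x,\cdot)$ cross each other (detected by resultants). Thom's lemma organizes this combinatorics by showing that sign conditions on a polynomial together with all of its derivatives cut $\mathbb{R}$ into intervals that behave uniformly under perturbation of the parameters, which is what ultimately makes the quantifier-free description possible.
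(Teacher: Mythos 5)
The paper does not prove this statement: it is quoted as a classical fact of real algebraic geometry with a pointer to the literature (Bochnak--Coste--Roy), so there is no in-paper argument to compare against. Your outline is the standard textbook proof, and its structure is sound: the reduction of an arbitrary linear projection to successive coordinate-forgetting projections via a linear change of coordinates is legitimate (a linear isomorphism and its inverse are polynomial, so both preserve semi-algebraicity), and the reduction to a single basic semi-algebraic set via commutation of images with finite unions is correct. You have also correctly located the entire mathematical content of the theorem in the parametric sign-determination lemma and correctly identified the standard tools (Thom's lemma, subresultant/Sturm sequences, cylindrical algebraic decomposition) and the degenerations one must control (drop of $t$-degree, root collisions, root crossings). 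What you have written is therefore an accurate and well-organized proof \emph{outline} rather than a proof: the sign-determination step, which is where all the work lies, is described but not established, and a full argument would require either carrying out the subresultant bookkeeping or an induction on degree as in the classical Seidenberg or Cohen--H\"ormander presentations. For the purposes of this paper, which simply cites the theorem, your level of detail is entirely appropriate.
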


\begin{remark}{\rm
As an immediate consequence of the Tarski--Seidenberg Theorem, we get semi-algebraicity of any set $\{ x \in A : \exists y \in B,  (x, y) \in C \},$  provided that $A ,  B,$  and $C$  are semi-algebraic sets in the corresponding spaces. Also, $\{ x \in A : \forall y \in B,  (x, y) \in C \}$ is a semi-algebraic set as its complement is the union of the complement of $A$  and the set $\{ x \in A : \exists y \in B,  (x, y) \not\in C \}.$ Thus, if we have a finite collection of semi-algebraic sets, then any set obtained from them with the help of a finite chain of quantifiers is also semi-algebraic. In particular, for a semi-algebraic function $f \colon \mathbb{R}^n \rightarrow {\mathbb{R}},$ it is easy to see that the nonsmooth slope $\frak{m}_f \colon \mathbb{R}^n \rightarrow {\mathbb{R}}$ is a semi-algebraic function. 
}\end{remark}

The following three well-known lemmas will be of great importance for us; see, for example, \cite[Theorem~1.8, Theorem~1.11, and Lemma~1.7]{HaHV2017}.

\begin{lemma} [Monotonicity Lemma] \label{MonotonicityLemma}
Let $f \colon (a, b) \rightarrow \mathbb{R}$ be a semi-algebraic function. Then there are finitely many points $a = t_0 < t_1 < \cdots < t_k = b$ such that the restriction of $f$ to each interval $(t_i, t_{i + 1})$ is analytic, and either constant, or strictly increasing or strictly decreasing.
\end{lemma}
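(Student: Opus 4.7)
The plan is to partition $(a,b)$ in two stages: first into open intervals on which $f$ is real-analytic, and then to refine each such interval by the sign of $f'$.

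\emph{Stage 1 (analyticity).} I would first record the elementary fact that every semi-algebraic subset of $\mathbb{R}$ is a finite disjoint union of isolated points and open intervals; this follows by writing such a set as a Boolean combination of sets $\{p = 0\}$ and $\{p > 0\}$ for univariate polynomials $p,$ together with the observation that each $\{p = 0\}$ is finite. Next, I would argue that $f$ is real-analytic at all but finitely many points of $(a,b).$ The cleanest route is to invoke the one-variable semi-algebraic cell decomposition (a consequence of the Tarski--Seidenberg Theorem~\ref{TarskiSeidenbergTheorem}, cited as \cite[Theorem~1.8]{HaHV2017}); alternatively, the set $D \subset (a,b)$ of non-analyticity points of $f$ is definable by a first-order formula in the theory of real closed fields, hence semi-algebraic, and one shows using Puiseux expansions of the graph $\Gamma_f$ that $D$ has empty interior, so by the finiteness principle above $D$ must be finite.

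\emph{Stage 2 (monotonicity).} On any interval $(s_i, s_{i+1})$ where $f$ is analytic, the derivative $f'$ is again a semi-algebraic function, since ``$f'(t) = y$'' is a first-order condition in $f.$ Applying the finiteness principle from Stage~1 to the three semi-algebraic sets $\{t : f'(t) > 0\},$ $\{t : f'(t) = 0\},$ and $\{t : f'(t) < 0\}$ inside $(s_i, s_{i+1}),$ I obtain a finer partition into finitely many points and open subintervals on which $f'$ has constant strict sign or vanishes identically. Combining all the breakpoints from every $(s_i, s_{i+1})$ yields the required sequence $a = t_0 < t_1 < \cdots < t_k = b,$ and on each $(t_j, t_{j+1})$ the classical mean value theorem immediately forces $f$ to be constant, strictly increasing, or strictly decreasing.

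\emph{Main obstacle.} The delicate step is the claim in Stage~1 that $f$ is analytic off a finite set. Tarski--Seidenberg by itself only delivers semi-algebraicity of various auxiliary sets; the genuine finiteness of the ``bad'' locus requires the structural fact that a one-dimensional semi-algebraic set admits a Puiseux-type local description, equivalently the $C^\omega$ cell decomposition theorem in one variable. I would treat this as a black-box input from \cite[Theorem~1.8]{HaHV2017} rather than reconstruct the full cell-decomposition machinery here.
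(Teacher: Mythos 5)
The paper offers no proof of this lemma at all: it is quoted as a standard background fact with a pointer to \cite[Theorem~1.8]{HaHV2017}, so there is no ``paper's route'' to compare against. Your two-stage argument is the standard proof and is essentially correct: finiteness of the non-analyticity locus plus semi-algebraicity of $f'$ (via Tarski--Seidenberg applied to the $\epsilon$--$\delta$ definition of the derivative) plus the mean value theorem on the sign-constant pieces does yield the conclusion, and you are right that the only genuinely nontrivial input is the analytic (Nash) cell decomposition in one variable. Two small cautions. First, your ``alternative'' route in Stage~1 is shakier than you suggest: analyticity is \emph{not} an obviously first-order property (it quantifies over convergence of a Taylor series), so the semi-algebraicity of the non-analyticity set $D$ cannot simply be read off from a formula; the honest argument is the one you gesture at, namely that the graph of $f$ lies in a curve $\{P(t,y)=0\}$ whose real branches are, off the finitely many zeros of the discriminant and leading coefficient, disjoint analytic functions of $t$, and $f$ must agree with one branch on each piece of a further finite semi-algebraic partition. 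Second, since you black-box exactly the statement the paper cites, your write-up is best read as a reduction of the Monotonicity Lemma to the one-variable analytic cell decomposition rather than a self-contained proof --- which is a perfectly reasonable division of labour, and consistent with how the paper itself treats the lemma.
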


\begin{lemma}[Curve Selection Lemma] \label{CurveSelectionLemma}
Consider a semi-algebraic set $S \subset \mathbb{R}^n$ and a point $\bar{x} \in \mathbb{R}^n$ that is a cluster point of $S.$ Then there exists an analytic semi-algebraic curve $\phi \colon (0, \epsilon) \to {\mathbb R}^n$ with $\lim_{t \to 0^+} \phi(t) = \bar{x}$ and with $\phi(t) \in S$ for $t \in (0, \epsilon).$
\end{lemma}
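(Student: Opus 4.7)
The plan is to combine the cell decomposition theorem for semi-algebraic sets with an induction on the ambient dimension $n$. First I would translate coordinates so that $\bar{x} = 0$ and work with $S \cap \mathbb{B}_\epsilon \setminus \{0\}$ for some small $\epsilon > 0$; this set still has $0$ as a cluster point and a germ of a curve at $0$ is all that is needed. Invoking the cylindrical analytic cell decomposition for semi-algebraic sets, a standard refinement of the Tarski--Seidenberg Theorem \cite{Bochnak1998, Dries1996}, I can write $S \cap \mathbb{B}_\epsilon$ as a finite disjoint union of analytic semi-algebraic cells. Finiteness forces $0$ to be a cluster point of at least one cell $C$, so it suffices to produce the desired analytic semi-algebraic curve inside $C$.

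The heart of the argument is an induction on $n$. For $n = 1$, the Monotonicity Lemma reduces $C$ to a finite union of points and analytic open intervals; since $0$ is a cluster point, one of those intervals has $0$ as an endpoint and $\phi(t) := \pm t$ does the job. For $n \ge 2$, I would exploit the cylindrical structure of $C$: either (i) $C$ is the graph of an analytic semi-algebraic function $g$ over a cell $D \subset \mathbb{R}^{n-1}$, or (ii) $C$ is a band $\{(y,z) : y \in D,\ g_1(y) < z < g_2(y)\}$ between two such graphs (with the obvious modification when a bound is $\pm\infty$). In either case the projection $D$ has $0 \in \mathbb{R}^{n-1}$ as a cluster point, so by the inductive hypothesis there is an analytic semi-algebraic curve $\psi : (0, \delta) \to \mathbb{R}^{n-1}$ with $\psi(t) \in D$ and $\psi(t) \to 0$. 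Setting $\phi(t) := (\psi(t), g(\psi(t)))$ in case (i), or $\phi(t) := (\psi(t), \tfrac{1}{2}(g_1(\psi(t)) + g_2(\psi(t))))$ in case (ii), produces an analytic semi-algebraic curve landing in $C$.

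The main obstacle is guaranteeing that $\phi(t) \to 0$ as $t \to 0^+$, not merely $\phi(t) \in C$. This requires the bounding functions $g$ (respectively $g_1, g_2$) of the chosen cell to have the expected continuous limit at $0$. I would arrange this at the outset by refining the cell decomposition inside $\mathbb{B}_\epsilon$ so that each bounding function extends continuously to the closure of its cell, and then by shrinking $\epsilon$; this refinement is itself a standard consequence of semi-algebraic cell decomposition. Alternatively, one can apply the one-variable Monotonicity Lemma to the scalar semi-algebraic function $t \mapsto g(\psi(t))$ (and its analogues) to force the correct limit behavior along $\psi$. Everything else is routine bookkeeping with semi-algebraic cells.
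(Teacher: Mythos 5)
The paper does not prove this lemma itself---it quotes it from the literature (H\`a--Ph\d{a}m, Bochnak--Coste--Roy)---so your argument has to stand on its own, and as written the inductive step does not close. You have correctly located the difficulty (forcing $\phi(t)\to\bar x$ rather than merely $\phi(t)\in C$), but neither of your proposed remedies resolves it. First, in the band case the midpoint curve need not converge to $\bar x$ even when $g_1,g_2$ extend continuously to $\overline D$: for $C=D\times(0,1)$ the point $(0,0)$ lies in $\overline C$, yet your curve converges to $(0,\tfrac12)$. Second, the refinement you invoke---that every bounding function extend continuously to the closure of its cell---is not available in general: for $g(x,y)=xy/(x^2+y^2)$, which is $0$-homogeneous and non-constant on directions, no finite cell partition of a punctured neighbourhood of the origin can make $g$ continuously extendable on every cell whose closure meets $0$. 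Third, and most fundamentally, even in the graph case the inductive hypothesis hands you \emph{some} curve $\psi\to 0$ in the base $D$ but gives no control on $\lim_{t\to 0^+} g(\psi(t))$; the Monotonicity Lemma only guarantees that this limit exists in $[-\infty,+\infty]$, not that it equals $0$. Producing a curve in $D$ along which $g$ tends to the prescribed limit value is precisely the curve selection problem for the graph $C$ itself, so at this point the argument becomes circular.

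The standard repair---and the one most in the spirit of this paper---is to parametrize by the distance to $\bar x$ so that convergence comes for free. Consider the semi-algebraic set $A:=\{(t,x)\ |\ x\in S,\ \|x-\bar x\|=t\}$ and apply definable (semi-algebraic) choice to the projection $A\to(0,\epsilon)$, $(t,x)\mapsto t$: this yields a semi-algebraic section $\sigma\colon(0,\epsilon)\to\mathbb{R}^n$ with $\sigma(t)\in S\cap\mathbb{S}_t(\bar x)$. Definable choice is proved by exactly the cylindrical induction you describe, but there one only has to pick a point in each fibre (a graph value or a band midpoint), with no convergence requirement, which is why it works. Since $\|\sigma(t)-\bar x\|=t$, the limit $\sigma(t)\to\bar x$ is automatic, and the Monotonicity Lemma applied coordinatewise makes $\sigma$ analytic on a smaller interval $(0,\epsilon')$. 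Alternatively, the section can be extracted from Hardt's triviality (Theorem~\ref{HardtTheorem}) applied to $x\mapsto\|x-\bar x\|$ restricted to $S$, exactly as the paper does elsewhere for the tangency variety. The decisive idea missing from your write-up is this trade of ``converges to $\bar x$'' for ``lies on the sphere of radius $t$.''
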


\begin{lemma}[Growth Dichotomy Lemma] \label{GrowthDichotomyLemma}
Let $f \colon (0, \epsilon) \rightarrow {\mathbb R}$ be a semi-algebraic function with $f(t) \ne 0$ for all $t \in (0, \epsilon).$ Then there exist constants $a \ne 0$ and $\alpha \in {\mathbb Q}$ such that $f(t) = at^{\alpha} + o(t^{\alpha})$ as $t \to 0^+.$
\end{lemma}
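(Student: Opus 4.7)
The plan is to reduce to the classical Puiseux theorem for real algebraic curves. First I would apply the Monotonicity Lemma~\ref{MonotonicityLemma} to shrink $\epsilon > 0$ so that $f$ is real-analytic on $(0,\epsilon)$ (and, since $f$ never vanishes, of constant sign). Next I would argue that $f$ satisfies a nontrivial polynomial equation. The graph $\Gamma$ of $f|_{(0,\epsilon)}$ is a one-dimensional semi-algebraic subset of $\mathbb{R}^2$, and any such set lies in the zero locus of some nonzero polynomial $P(t,y) \in \mathbb{R}[t,y]$: writing $\Gamma$ as a finite union of basic sets of the form $\{P_{i} = 0,\; Q_{i,j} > 0\}$, no component can be described purely by strict inequalities (those define open subsets of $\mathbb{R}^2$, which are two-dimensional), so the product of one equality polynomial per piece annihilates $f$.

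Having established that $P(t, f(t)) \equiv 0$ on $(0,\epsilon)$, I would invoke Puiseux's theorem: near $t = 0^+$ the algebraic curve $\{P = 0\}$ decomposes into finitely many analytic branches, each admitting a convergent Puiseux expansion
\begin{eqnarray*}
y &=& \sum_{k \ge k_0} a_k\, t^{k/N}
\end{eqnarray*}
for some positive integer $N$ and some integer $k_0$ (possibly negative, so as to accommodate the case where $f$ blows up at $0$). The connected real-analytic curve $t \mapsto (t, f(t))$ must, after a further shrinking of $\epsilon$, coincide with exactly one such branch, yielding $f(t) = \sum_{k \ge k_0} a_k t^{k/N}$. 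The hypothesis $f(t) \neq 0$ ensures that this branch is not identically zero, so its first nonzero coefficient is well defined; relabelling $k_0$ to be the least index with $a_{k_0} \neq 0$ and setting $\alpha := k_0/N \in \mathbb{Q}$ and $a := a_{k_0} \neq 0$ gives the desired expansion $f(t) = a t^\alpha + o(t^\alpha)$ as $t \to 0^+$.

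I expect the main obstacle to be the algebraicity step, namely producing a nonzero polynomial annihilating $f$. While standard, some care is needed in verifying that a one-dimensional semi-algebraic subset of $\mathbb{R}^2$ lies in an algebraic curve, and in showing that the analytic function $f$ is globally (on a one-sided neighborhood of $0$) identified with a single Puiseux branch rather than switching between them. Both points can be handled via the Tarski--Seidenberg Theorem~\ref{TarskiSeidenbergTheorem} together with the analytic identity principle applied to the polynomial relation $P(t, f(t)) = 0$; after that, the conclusion is a direct reading of the leading term of the Puiseux expansion.
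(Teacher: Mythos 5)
Your proof is correct, and it is the standard argument for this fact. The paper itself does not prove the Growth Dichotomy Lemma --- it is quoted as a known result with a citation to \cite[Lemma~1.7]{HaHV2017} --- but the route you take (empty interior of the graph forces a nontrivial polynomial relation $P(t,f(t))\equiv 0$, then Puiseux expansion of the real branches of $\{P=0\}$ at $t=0^+$, with connectedness pinning $f$ to a single branch) is exactly the canonical proof, and your handling of possibly negative exponents for unbounded $f$ is appropriate since the lemma does not require $\alpha>0$.
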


In the sequel we will make use of Hardt's semi-algebraic triviality. We present a particular case--adapted to our needs--of a more general result: see \cite{Bochnak1998, Hardt1980, Dries1996} for the statement in its full generality.

\begin{theorem}[Hardt's semi-algebraic triviality] \label{HardtTheorem}
Let $S$ be a semi-algebraic set in $\mathbb{R}^n$ and $f \colon S \rightarrow\mathbb{R}$ a continuous semi-algebraic map.
Then there are finitely many points $-\infty = t_0 < t_1 < \cdots < t_k = +\infty$ such that $f$ is semi-algebraically trivial over each the interval $(t_i, t_{i + 1}),$ that is, there exists a semi-algebraic set $F_i \subset \mathbb{R}^n$ and a semi-algebraic homeomorphism
$h_i \colon f^{-1} (t_i, t_{i + 1}) \rightarrow (t_i, t_{i + 1}) \times F_i$ such that the composition $h_i$ with the projection $(t_i, t_{i + 1}) \times F_i \rightarrow (t_i, t_{i + 1}), (t, x) \mapsto t,$  is equal to the restriction of $f$ to $f^{-1} (t_i, t_{i + 1}).$
\end{theorem}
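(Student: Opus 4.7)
The plan is to reduce the problem to trivializing a coordinate projection, and then to exploit cylindrical semi-algebraic cell decomposition.

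First I would replace $f \colon S \to \mathbb{R}$ by its graph $\Gamma := \{(x, t) \in S \times \mathbb{R} \ | \ t = f(x)\} \subset \mathbb{R}^{n+1}$, which is semi-algebraic because $f$ is, and is homeomorphic to $S$ via the projection on the first factor (here continuity of $f$ matters, to get a semi-algebraic homeomorphism). Under this identification $f$ becomes the restriction to $\Gamma$ of the linear projection $\pi \colon \mathbb{R}^{n+1} \to \mathbb{R}$ onto the last coordinate. So it suffices to trivialize $\pi|_\Gamma$ over finitely many open intervals of the target line.

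Next I would invoke the cylindrical algebraic decomposition theorem (a staple of semi-algebraic geometry; see \cite{Bochnak1998}) to produce a finite partition $\mathcal{C}$ of $\mathbb{R}^{n+1}$ into semi-algebraic cells that is compatible with $\Gamma$ (i.e., $\Gamma$ is a union of cells) and is cylindrical with respect to $\pi$. Cylindricity means that the induced decomposition on $\mathbb{R}$ consists of finitely many points $t_1 < \cdots < t_{k-1}$ and the open intervals between consecutive points (together with the unbounded tails); set $t_0 = -\infty$, $t_k = +\infty$. Moreover, for each cell $C \in \mathcal{C}$ contained in $\Gamma$ and projecting onto an open interval $I_i := (t_i, t_{i+1})$, the cylindrical structure represents $C$ as a band between graphs of continuous semi-algebraic functions defined on a cylindrical cell below it in $\mathbb{R}^n \times I_i$, and in particular $\pi|_C \colon C \to I_i$ is a trivial cell-fibration.

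Fix $i$ and a base point $\tau_i \in I_i$, and set $F_i := f^{-1}(\tau_i)$. Using the cylindrical structure, I would construct the trivialization $h_i \colon f^{-1}(I_i) \to I_i \times F_i$ inductively in $n$, coordinate by coordinate: on each cell $C \subset \Gamma$ above $I_i$, the two bounding graph functions allow one to rescale the vertical coordinate affinely between them to match the corresponding values at $\tau_i$, producing a semi-algebraic cell-wise homeomorphism compatible with $\pi$; composing these rescalings for all cylindrical layers yields $h_i$. The trivialization $h_i$ is semi-algebraic by construction and commutes with $\pi$ on the nose.

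The main technical obstacle, and the heart of Hardt's theorem, is showing that these cell-by-cell homeomorphisms glue into a \emph{global} homeomorphism of $f^{-1}(I_i)$; equivalently, that continuity is preserved across the frontiers of adjacent cells. This requires the CAD to satisfy the frontier condition (the closure of each cell is a union of cells) and a careful induction on $n$ in which the outer rescaling is chosen to match the inner rescaling on the frontier. Granting this gluing, $h_i$ is the desired semi-algebraic trivialization, and the finitely many exceptional points $t_1, \ldots, t_{k-1}$ coming from the cylindrical decomposition of $\mathbb{R}$ provide the partition claimed in the statement.
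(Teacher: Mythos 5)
First, note that the paper does not prove this statement at all: it is quoted as a known deep result (Hardt's theorem), with references to Bochnak--Coste--Roy, Hardt's original article, and van den Dries--Miller. So there is no in-paper argument to compare yours against; the only question is whether your sketch stands on its own as a proof. Your opening reductions are correct and standard, and they do match the skeleton of the textbook proof: replacing $f$ by the projection of its graph onto the $t$-axis is legitimate (continuity of $f$ makes $x \mapsto (x, f(x))$ a semi-algebraic homeomorphism onto $\Gamma$), and a decomposition of $\mathbb{R} \times \mathbb{R}^n$ cylindrical over the $t$-line, adapted to $\Gamma$, does produce the finite set of exceptional values $t_1 < \cdots < t_{k-1}$.

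The genuine gap is exactly where you say ``granting this gluing.'' The cell-by-cell statement you prove before that point --- each cell of a cylindrical decomposition is semi-algebraically trivial over the interval it projects onto --- is essentially immediate from the definition of a cell and carries almost none of the content of the theorem. The entire difficulty of Hardt's theorem is that the fiberwise rescalings on individual cells do not automatically assemble into a homeomorphism of $f^{-1}(t_i, t_{i+1})$, and the mechanism you propose for fixing this is not workable as stated. Two concrete problems: (a) a cylindrical decomposition adapted to a given family need not satisfy the frontier condition, and arranging the frontier condition for a decomposition that remains cylindrical is itself a nontrivial refinement (in dimension $\ge 3$ the naive construction fails); (b) even with the frontier condition, the bounding functions of a cell need not extend continuously to the frontier of its base, so the affine rescaling on an open cell need not converge to the rescaling chosen on a boundary cell --- there is no canonical way to ``match'' them. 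The actual proof (e.g.\ Bochnak--Coste--Roy, Thm.~9.3.2) avoids cell-by-cell gluing: it runs an induction on $n$ in which one trivializes an entire cylinder $I_i \times \mathbb{R}^n$ at once, feeding the graphs of all bounding functions into the data over $I_i \times \mathbb{R}^{n-1}$ to which the inductive trivialization must be adapted, so that compatibility is built in rather than checked afterwards. Since that induction is the theorem, your write-up is an accurate outline of the strategy but not a proof; given that the paper itself treats the result as a citation, the honest course is to do the same rather than to gesture at the gluing step.
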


We will also need the following lemma. 

\begin{lemma} \label{Lemma210}
Consider a lower semi-continuous semi-algebraic function $f \colon \mathbb{R}^n \rightarrow {\mathbb{R}}$ and a semi-algebraic curve $\phi \colon [a, b] \rightarrow \mathbb{R}^n.$ Then for all but finitely many ${{t}} \in [a, b],$ the mappings $\phi$ and $f \circ \phi$ are analytic at ${{t}}$ and satisfy
\begin{eqnarray*}
v \in \partial f (\phi({{t}})) &\Longrightarrow& \langle v, \dot{\phi}({{t}}) \rangle \ = \ (f \circ \phi)'({{t}}), \\
v \in \partial^\infty f (\phi({{t}})) &\Longrightarrow& \langle v, \dot{\phi}({{t}}) \rangle \ = \ 0.
\end{eqnarray*}
\end{lemma}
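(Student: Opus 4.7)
My plan is to combine univariate control from the Monotonicity Lemma with a finite Whitney stratification of the epigraph of $f$. First, the analyticity claim follows from two applications of the Monotonicity Lemma: once (coordinate-wise) to the semi-algebraic curve $\phi$, giving a finite exceptional set $T_1 \subset [a,b]$, and once to the semi-algebraic composition $f \circ \phi \colon [a,b] \to \mathbb{R}$ (which is semi-algebraic by Tarski--Seidenberg), giving a finite exceptional set $T_2 \subset [a,b]$. Outside $T_1 \cup T_2$ both $\phi$ and $f\circ \phi$ are analytic.

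Next I would lift the problem to the epigraph. Setting $\Phi(t):=(\phi(t), f(\phi(t)))$, this is a semi-algebraic curve lying in the closed semi-algebraic set $\mathrm{epi}\, f \subset \mathbb{R}^{n+1}$. I would then invoke the standard fact that any closed semi-algebraic set admits a finite Whitney $C^1$ semi-algebraic stratification $\mathrm{epi}\, f = \bigsqcup_{j=1}^{N} M_j$ (see \cite{Bochnak1998}). Each preimage $\Phi^{-1}(M_j) \subset [a,b]$ is semi-algebraic, hence a finite union of points and open intervals. After enlarging $T_1 \cup T_2$ by the finitely many endpoints that arise, I may assume that on each complementary open subinterval $\Phi$ is analytic and lies inside a single stratum $M_{j(i)}$.

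For such $t$, the derivative $\dot{\Phi}(t) = (\dot{\phi}(t),(f\circ\phi)'(t))$ is a tangent vector to $M_{j(i)}$ at $\Phi(t)$. The heart of the argument is the claim that every limiting normal $(w,\lambda) \in N(\Phi(t); \mathrm{epi}\, f)$ lies in the orthogonal complement $(T_{\Phi(t)} M_{j(i)})^\perp$, which then forces $\langle w, \dot{\phi}(t)\rangle + \lambda\, (f\circ\phi)'(t) = 0$. Specializing to $\lambda = -1$ and $w = v \in \partial f(\phi(t))$ produces the first identity of the lemma; specializing to $\lambda = 0$ and $w = v \in \partial^\infty f(\phi(t))$ produces the second.

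The main obstacle will be justifying this orthogonality. For \emph{Fr\'echet} normals it is elementary: testing the defining inequality along the analytic curve $\Phi$ in both directions $s \to 0^\pm$ forces the inner product with $\dot{\Phi}(t)$ to vanish. For \emph{limiting} normals, however, the approximating Fr\'echet normals $(w^k, \lambda^k)$ sit at points $y^k \to \Phi(t)$ that may lie on strata other than $M_{j(i)}$; Whitney's condition (a) is exactly the ingredient needed here, since it guarantees that limits of normal spaces to neighboring strata collapse into $(T_{\Phi(t)} M_{j(i)})^\perp$, so the limit $(w,\lambda)$ retains the required orthogonality. This is the standard semi-algebraic chain rule along a curve, and is what lets the conclusion go through despite $f$ being only lsc rather than locally Lipschitz.
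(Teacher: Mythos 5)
Your argument is correct, but it takes a genuinely different route from the paper. The paper's proof is a two-line reduction: it invokes \cite[Lemma~2.10]{Drusvyatskiy2015}, which supplies the two identities on an initial segment of the parameter interval, and then propagates them to all of $[a,b]$ by a supremum-and-contradiction argument. You instead give a self-contained proof by lifting the curve to $\Phi(t)=(\phi(t),f(\phi(t)))\in\mathrm{epi}\,f$, taking a Whitney $(a)$-regular semi-algebraic $C^1$ stratification of the (closed, since $f$ is lsc) epigraph, arranging after deleting finitely many parameters that $\Phi$ stays in a single stratum $M$ on each remaining subinterval, and then showing $N(\Phi(t);\mathrm{epi}\,f)\subseteq(T_{\Phi(t)}M)^{\perp}\ni\dot\Phi(t)^{\perp}$; this is in substance the projection formula of \cite[Proposition~4]{Bolte2007-2}, the alternative reference the paper itself points to. The chain of containments is sound: for an approximating point $y^k$ in a stratum $M'$ one has $\widehat{N}(y^k;\mathrm{epi}\,f)\subseteq\widehat{N}(y^k;M')=(T_{y^k}M')^{\perp}$ because $M'\subseteq\mathrm{epi}\,f$ (this small inclusion is worth stating explicitly), and Whitney's condition $(a)$, applicable by the frontier condition since $\Phi(t)\in M\cap\overline{M'}$, forces any limit of such normals to land in $(T_{\Phi(t)}M)^{\perp}$; pairing with $\dot\Phi(t)=(\dot\phi(t),(f\circ\phi)'(t))$ and specializing the last coordinate to $-1$ and $0$ gives the two implications. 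What each approach buys: the paper's proof is shorter but entirely dependent on the cited lemma; yours is longer but exposes the actual mechanism (stratifiability plus Whitney $(a)$) and makes clear why mere lower semicontinuity, rather than local Lipschitzness, suffices.
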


\begin{proof}
This follows immediately from \cite[Lemma~2.10]{Drusvyatskiy2015} (see also \cite[Proposition~4]{Bolte2007-2}), and so is omitted.
\end{proof}

\section{Tangencies} \label{Section3}

From now on, let $f \colon \mathbb{R}^n \rightarrow {\Bbb R}$ be a non-constant semi-algebraic function, which is continuous around a point $\bar{x} \in \mathbb{R}^n.$ Using the so--called tangency variety of $f$ at $\bar{x},$ we define finite sets of real numbers that can be used to test if $f$ has a local minimizer at $\bar{x}$ and if $f$ has an $\alpha$th order sharp local minimizer at $\bar{x}.$ Let us begin with the following definition (see also \cite{HaHV2017}).

\begin{definition}{\rm
The {\em tangency variety of $f$ at $\bar{x}$} is defined as follows:
\begin{eqnarray*}
\Gamma(f) &:=& \{x \in \mathbb{R}^n  \ | \ \exists \lambda \in {\Bbb R} \textrm{ such that } \lambda (x - \bar{x}) \in \partial f(x) \}.
\end{eqnarray*}
}\end{definition}

Remark that under mild regularity conditions, $\Gamma(f)$ is the set of critical points of the function $f + \delta_{{\Bbb B}_{t}(\bar{x})},$ 
where $\delta_{{\Bbb B}_{t}(\bar{x})}$ denotes the {\em indicator function} of the ball ${\Bbb B}_{t}(\bar{x}).$ Moreover, thanks to the Fermat rule (Theorem~\ref{FermatRule}), we can see that for all sufficiently small $t > 0,$ the tangency variety $\Gamma(f)$ contains the set of minimizers of the optimization problem $\min_{x \in {\Bbb S}_{t}(\bar{x})} f(x);$ in particular, $\bar{x}$ is a cluster point of $\Gamma(f).$

By the Tarski--Seidenberg Theorem~\ref{TarskiSeidenbergTheorem}, $\Gamma(f)$ is a semi-algebraic set. Applying Hardt's triviality Theorem~\ref{HardtTheorem} for the continuous semi-algebraic function
$$\rho \colon \Gamma(f) \rightarrow \mathbb{R}, \quad x \mapsto \|x - \bar{x}\|,$$
we get a semi-algebraic set $F \subset \mathbb{R}^n$ and a semi-algebraic homeomorphism
$$h \colon \rho^{-1} \big((0, \epsilon)\big) \rightarrow (0, \epsilon) \times F$$ 
such that the following diagram commutes:
\begin{equation*}
\CD \rho^{-1}\big((0, \epsilon)\big) @> h >> (0, \epsilon)  \times F \\
@V \rho VV @V \pi VV\\
(0, \epsilon)   @> \mathrm{id} >> (0, \epsilon) 
\endCD
\end{equation*}
where $\pi$ is the projection on the first component of the product and $\mathrm{id}$ is the identity map.

Since $F$ is semi-algebraic, the  number of its connected components, say, $p,$ is finite. Then $\Gamma(f) \cap \mathbb{B}_\epsilon(\bar{x}) \setminus \{\bar{x}\}$ has exactly $p$ connected components, say, $\Gamma_1, \ldots, \Gamma_p,$ and each such component is a semi-algebraic set. Moreover, for all $t \in (0, \epsilon)$ and all $k = 1, \ldots, p,$ the sets $\Gamma_k \cap \mathbb{S}_t(\bar{x})$ are connected (recall that $\mathbb{S}_t(\bar{x})$ stands for the sphere centered at $\bar{x}$ of radius $t$). Corresponding to each $\Gamma_k,$ let
$$f_k \colon (0, \epsilon) \rightarrow \mathbb{R}, \quad t \mapsto f_k(t),$$
be the function defined by $f_k(t) :=  f(x),$ where $x \in \Gamma_k \cap \mathbb{S}_t(\bar{x}).$

\begin{lemma}
For all $\epsilon > 0$ small enough, the following statements hold:
\begin{enumerate}
\item[{\rm (i)}]  All the functions $f_k$ are well-defined and semi-algebraic.
\item[{\rm (ii)}] Each the function $f_k$ is either constant or strictly monotone.
\end{enumerate}
\end{lemma}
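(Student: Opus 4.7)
The plan is to treat the two claims in sequence, using Lemma~\ref{Lemma210} to handle well-definedness of $f_k$ via semi-algebraic paths along spheres centered at $\bar{x},$ and the Monotonicity Lemma~\ref{MonotonicityLemma} for the dichotomy between constant and strictly monotone.

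For part~(i), I would fix $t \in (0, \epsilon)$ and any two points $x_0, x_1 \in \Gamma_k \cap \mathbb{S}_t(\bar{x}).$ Since this fiber is connected and semi-algebraic, it is semi-algebraically path-connected, so I can join the two points by a continuous semi-algebraic path $\phi \colon [0, 1] \to \Gamma_k \cap \mathbb{S}_t(\bar{x}).$ Two observations drive the argument. First, differentiating the identity $\|\phi(s) - \bar{x}\|^2 \equiv t^2$ gives $\langle \phi(s) - \bar{x}, \dot{\phi}(s) \rangle = 0$ wherever $\phi$ is differentiable. Second, because $\phi(s) \in \Gamma(f),$ at each $s$ there exists $\lambda(s) \in \mathbb{R}$ such that $v(s) := \lambda(s)(\phi(s) - \bar{x}) \in \partial f(\phi(s)),$ so in particular $\partial f(\phi(s)) \neq \emptyset.$ Invoking Lemma~\ref{Lemma210}, for all but finitely many $s \in [0, 1]$ one obtains
\begin{equation*}
(f \circ \phi)'(s) \;=\; \langle v(s), \dot{\phi}(s) \rangle \;=\; \lambda(s) \langle \phi(s) - \bar{x}, \dot{\phi}(s) \rangle \;=\; 0.
\end{equation*}
Combining this with continuity of $f \circ \phi$ (guaranteed by taking $\epsilon$ small enough that $f$ is continuous on $\mathbb{B}_{\epsilon}(\bar{x})$) forces $f \circ \phi$ to be constant on $[0, 1],$ giving $f(x_0) = f(x_1);$ hence $f_k$ is well-defined. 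Semi-algebraicity then follows from the Tarski--Seidenberg Theorem~\ref{TarskiSeidenbergTheorem}: the graph of $f_k$ is the image under the projection $(x, t, y) \mapsto (t, y)$ of the semi-algebraic set $\{(x, t, y) : x \in \Gamma_k,\ \|x - \bar{x}\| = t,\ y = f(x)\}.$

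For part~(ii), each $f_k$ is a semi-algebraic function on $(0, \epsilon),$ so Lemma~\ref{MonotonicityLemma} produces finitely many breakpoints $0 = s_0 < s_1 < \cdots < s_m = \epsilon$ on whose complement $f_k$ is analytic and either constant or strictly monotone. Shrinking $\epsilon$ below the smallest positive breakpoint appearing across the $p$ functions $f_1, \ldots, f_p$ leaves a single piece for each $f_k,$ as required.

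The part I expect to need the most care is step~(i): I have to ensure that Lemma~\ref{Lemma210}, which pins $\langle v, \dot{\phi}(s) \rangle$ to one common value across all $v \in \partial f(\phi(s)),$ can be invoked for the specific subgradient $v(s) = \lambda(s)(\phi(s) - \bar{x})$ supplied by the tangency condition, even though $\lambda(s)$ need not depend nicely on $s.$ Because the lemma only asks for existence of some $v$ in the subdifferential (no measurable selection is required), this worry dissolves; the sphere-tangent/radial orthogonality then forces $(f \circ \phi)'(s)$ to vanish off a finite set, and the rest is routine.
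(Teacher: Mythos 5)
Your proposal is correct and follows essentially the same route as the paper: join two points of the fiber $\Gamma_k \cap \mathbb{S}_t(\bar{x})$ by a semi-algebraic path, use the tangency condition together with Lemma~\ref{Lemma210} and the orthogonality $\langle \phi(s)-\bar{x}, \dot{\phi}(s)\rangle = 0$ to kill $(f\circ\phi)'$ off a finite set, then invoke Tarski--Seidenberg for semi-algebraicity and the Monotonicity Lemma for part~(ii). Your explicit remark that continuity of $f\circ\phi$ is needed to pass from a vanishing derivative off a finite set to constancy is a small point of extra care that the paper glosses over.
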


\begin{proof}
(i) Fix $k \in \{1, \ldots, p\},$ and take any $t \in (0, \epsilon).$ We will show that the restriction of $f$ on $\Gamma_k \cap \mathbb{S}_t(\bar{x})$ is constant. To see this, let $\phi \colon [0, 1] \rightarrow {\Bbb R}^n$ be a smooth semi-algebraic curve such that $\phi(\tau) \in \Gamma_k \cap \mathbb{S}_t(\bar{x})$ for all $\tau \in [0, 1].$ By definition, we have
\begin{eqnarray*}
\|\phi(\tau) - \bar{x}\|  = t \quad & \textrm{ and } & \quad \lambda(\tau) (\phi(\tau) - \bar{x}) \in \partial f(\phi(\tau))
\end{eqnarray*}
for some $\lambda(\tau) \in \mathbb{R}.$ Moreover, in view of Lemma~\ref{Lemma210}, for all but finitely many $\tau \in [a, b],$ the mappings $\phi$ and $f \circ \phi$ are analytic at $\tau$ and satisfy
\begin{eqnarray*}
v \in \partial f (\phi(\tau)) &\Longrightarrow& \langle v, \dot{\phi}(\tau) \rangle \ = \ (f \circ \phi)'(\tau).
\end{eqnarray*}
Therefore
\begin{eqnarray*}
(f \circ \phi)'(\tau) &=&  \langle \lambda(\tau) \big(\phi(\tau) - \bar{x}\big), \dot{\phi}(\tau) \rangle \\
&=& \frac{\lambda(\tau)}{2} \frac{d \|\phi(\tau) - \bar{x}\|^2}{d\tau}\\
&=& 0.
\end{eqnarray*}
So $f$ is constant on the curve $\phi.$

On the other hand, since the set $\Gamma_k \cap \mathbb{S}_t(\bar{x})$ is connected semi-algebraic, it is path connected. Hence, any two points in $\Gamma_k \cap \mathbb{S}_t(\bar{x})$ can be joined by a piecewise smooth semi-algebraic curve (see \cite[Theorem~1.13]{HaHV2017}). It follows that  the restriction of $f$ on $\Gamma_k \cap \mathbb{S}_t(\bar{x})$ is constant and so the function $f_k$ is well-defined. Finally, by the Tarski--Seidenberg Theorem~\ref{TarskiSeidenbergTheorem}, $f_k$ is semi-algebraic.

(ii) This is a direct consequence of Lemma~\ref{MonotonicityLemma} (perhaps after reducing $\epsilon$).
\end{proof}

For each $t \in (0, \epsilon),$ the sphere $\mathbb{S}_t(\bar{x})$ is a nonempty compact semi-algebraic set. Hence, the function
$$\psi \colon (0, \epsilon) \rightarrow \mathbb{R}, \quad t \mapsto \psi(t) := \min_{x \in \mathbb{S}_t(\bar{x})} f(x),$$
is well-defined, and moreover, it is semi-algebraic because of the Tarski--Seidenberg Theorem~\ref{TarskiSeidenbergTheorem} (see the discussion in \cite[Section~1.6]{HaHV2017}).  The following lemma is simple but useful.
\begin{lemma} \label{Lemma32}
For $\epsilon > 0$ small enough, the following statements hold:
\begin{enumerate}
\item [{\rm (i)}] The functions $\psi$ and $f_1, \ldots, f_p$ are either coincide or disjoint.

\item [{\rm (ii)}] $\psi(t) = \min_{k = 1, \ldots, p} f_k(t)$ for all $t \in (0, \epsilon).$

\item [{\rm (iii)}] There exists an index $k \in \{1, \ldots, p\}$ such that $\psi(t) = f_k(t)$ for all $t \in (0, \epsilon).$
\end{enumerate}
\end{lemma}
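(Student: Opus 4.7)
The plan is to establish (ii) first by means of the Fermat rule, then to derive (i) from a one--variable semi-algebraic dichotomy, and finally to deduce (iii) as an essentially immediate consequence of (i) and (ii).

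For (ii), I fix $t \in (0, \epsilon)$. Since $\mathbb{S}_t(\bar{x})$ is compact and $f$ is continuous near $\bar{x}$, the minimum defining $\psi(t)$ is attained at some point $x_t \in \mathbb{S}_t(\bar{x})$. As the paper has already noted in the discussion preceding the lemma, the Fermat rule (Theorem~\ref{FermatRule}) applied to the constrained problem $\min_{x \in \mathbb{S}_t(\bar{x})} f(x)$ forces $x_t \in \Gamma(f)$. For $\epsilon$ small enough, $\Gamma(f) \cap \mathbb{B}_\epsilon(\bar{x}) \setminus \{\bar{x}\}$ is the disjoint union $\Gamma_1 \sqcup \cdots \sqcup \Gamma_p$, so $x_t$ lies in exactly one $\Gamma_{k(t)}$, whence $\psi(t) = f(x_t) = f_{k(t)}(t)$. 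On the other hand, $\psi(t) \le f_k(t)$ trivially for every $k$, since $f_k(t)$ is the common value of $f$ on $\Gamma_k \cap \mathbb{S}_t(\bar{x})$, and this intersection is nonempty and contained in $\mathbb{S}_t(\bar{x})$. Combining the two inequalities gives $\psi(t) = \min_{k = 1, \ldots, p} f_k(t)$.

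For (i), I note that each of the $p+1$ functions $\psi, f_1, \ldots, f_p$ is semi-algebraic on $(0, \epsilon)$. For any pair $(g, h)$ among them, the set $\{t \in (0, \epsilon) : g(t) = h(t)\}$ is semi-algebraic in $\mathbb{R}$, hence a finite union of isolated points and open intervals (apply the Monotonicity Lemma~\ref{MonotonicityLemma} to $g - h$). After shrinking $\epsilon$ finitely many times (once for each of the finitely many pairs), I may assume that every such coincidence set is either all of $(0, \epsilon)$ or empty. This is precisely the assertion that any two of the functions $\psi, f_1, \ldots, f_p$ either coincide on $(0, \epsilon)$ or have disjoint graphs there.

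For (iii), I combine (i) and (ii). By (ii), for every $t \in (0, \epsilon)$ there exists at least one $k \in \{1, \ldots, p\}$ with $\psi(t) = f_k(t)$, so
$$(0, \epsilon) \;=\; \bigcup_{k=1}^p \{t \in (0, \epsilon) : \psi(t) = f_k(t)\}.$$
By (i), each of the $p$ sets in this union is either empty or all of $(0, \epsilon)$; since their union is $(0, \epsilon)$, at least one equals $(0, \epsilon)$, giving $\psi \equiv f_k$ for that index $k$. The main technical point is the use of Fermat's rule in step~(ii), which requires the qualification condition $\partial^\infty f(x_t) \cap N(x_t; \mathbb{S}_t(\bar{x})) = \{0\}$; but this has already been handled by the paper when it asserted that $\bar{x}$ is a cluster point of $\Gamma(f)$, so I would simply cite that observation rather than reprove it.
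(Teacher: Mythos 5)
Your overall architecture matches the paper's: (ii) via the Fermat rule, (i) via a one-variable semi-algebraic dichotomy (the Monotonicity Lemma~\ref{MonotonicityLemma}), and (iii) by combining the two. Parts (i) and (iii) are fine as written. The problem is in (ii), and it is exactly at the point you flag as ``the main technical point'' and then decline to prove.

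You apply the Fermat rule (Theorem~\ref{FermatRule}) at an \emph{arbitrary} minimizer $x_t$ of $f$ on $\mathbb{S}_t(\bar{x})$ and justify the qualification condition $\partial^\infty f(x_t) \cap N(x_t; \mathbb{S}_t(\bar{x})) = \{0\}$ by citing the paper's remark, made before the lemma, that $\Gamma(f)$ contains the minimizers on each sphere. That remark is itself an unproved gloss: it already presupposes the qualification condition, and the place where the paper actually discharges it is precisely inside the proof of this lemma. So your citation is circular. The condition is not automatic here, because $f$ is only assumed continuous (not locally Lipschitz) near $\bar{x}$, so $\partial^\infty f(x)$ can be a nontrivial cone --- indeed $\partial f(x)$ can even be empty at a minimizer on the sphere, in which case that minimizer does not lie in $\Gamma(f)$ at all. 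The paper's actual argument avoids this by not working with an arbitrary minimizer: it uses the Curve Selection Lemma~\ref{CurveSelectionLemma} to produce a single analytic semi-algebraic curve $\phi$ with $\|\phi(t) - \bar{x}\| = t$ and $f(\phi(t)) = \psi(t)$, and then invokes Lemma~\ref{Lemma210} to get $\langle v, \dot{\phi}(t) \rangle = 0$ for every $v \in \partial^\infty f(\phi(t))$; since $\langle \phi(t) - \bar{x}, \dot{\phi}(t) \rangle = \tfrac{1}{2}\tfrac{d}{dt}\|\phi(t) - \bar{x}\|^2 = t \neq 0$ and $N(\phi(t); \mathbb{S}_t(\bar{x}))$ is spanned by $\phi(t) - \bar{x}$, the qualification condition holds \emph{along this curve}, which yields one minimizer in $\Gamma(f) \cap \mathbb{S}_t(\bar{x})$ for each $t$ --- and one is all that (ii) needs. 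To repair your proof, replace the arbitrary $x_t$ by such a curve and run the Lemma~\ref{Lemma210} computation; the remaining two inequalities in your argument for (ii), and your treatment of (i) and (iii), then go through as written.
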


\begin{proof}
(i) This is an immediate consequence of the Monotonicity Lemma~\ref{MonotonicityLemma}.

(ii) Without loss of generality, assume $\bar{x} = 0$ and $f (\bar{x}) = 0.$ 
Applying the Curve Selection Lemma~\ref{CurveSelectionLemma} and shrinking $\epsilon$ (if necessary), we find an analytic semi-algebraic curve $\phi \colon (0, \epsilon) \rightarrow \mathbb{R}^n$ such that $\|\phi(t)\| = t$ and $f \circ \phi(t) = \psi(t)$ for all $t.$ By Lemma~\ref{Lemma210}, then we have for any $t \in (0, \epsilon),$
\begin{eqnarray}
v \in \partial^\infty f(\phi(t)) & \Longrightarrow & \langle v, \dot{\phi}(t) \rangle \ = \ 0 \nonumber.
\end{eqnarray}
Observe
\begin{eqnarray*}
\langle \phi(t), \dot{\phi}(t) \rangle & = & \frac{1}{2} \frac{d}{d t} \|\phi(t)\|^2,
\end{eqnarray*}
and hence the qualification condition
\begin{eqnarray*}
\partial^\infty f(\phi(t)) \cap N\big(\phi(t); \mathbb{S}_t(\bar{x})\big) = \{0\}
\end{eqnarray*}
holds for all $t \in (0, \epsilon).$ Consequently, since $\phi(t)$ minimizes $f$ subject to $\|x\| = t,$
applying the Fermat rule (Theorem~\ref{FermatRule}), we deduce that $\phi(t)$ belongs to $\Gamma(f).$ Therefore,
\begin{eqnarray*}
\psi(t)  &=& \min_{x \in \mathbb{S}_t(\bar{x})} f(x) \ = \ \min_{x \in \Gamma(f) \cap \mathbb{S}_t(\bar{x})} f(x) \ = \ \min_{k = 1, \ldots, p} \min_{x \in \Gamma_k \cap \mathbb{S}_t(\bar{x})} f(x) \ = \ \min_{k = 1, \ldots, p}  f_k(t),
\end{eqnarray*}
where the third equality follows from the fact that
\begin{eqnarray*}
\Gamma(f) \cap \mathbb{S}_t(\bar{x}) &=& \bigcup_{k = 1}^p \Gamma_k \cap \mathbb{S}_t(\bar{x}).
\end{eqnarray*}

(iii) This follows from items (i) and (ii).
\end{proof}

\section{Main results} \label{Section4}

Recall that $f \colon \mathbb{R}^n \rightarrow {\Bbb R}$ is a non-constant semi-algebraic function, which is continuous around a point $\bar{x} \in \mathbb{R}^n.$ As in the previous section, we associate to the function $f$ a finite number of functions $f_1, \ldots, f_p$ of a single variable. Let 
$$K :=\{k \ | \ f_k \textrm{ is not constant} \}.$$
Note that $f_k \equiv f(\bar{x})$ for all $k \not \in K.$ By the Growth Dichotomy Lemma~\ref{GrowthDichotomyLemma}, we can write for each $k \in K,$
\begin{eqnarray*}
f_k(t) &=& f(\bar{x}) + a_k t^{\alpha_k} + o(t^{\alpha_k}) \quad \textrm{ as } \quad t \to 0^+,
\end{eqnarray*}
where $a_k \in \mathbb{R}, a_k \ne 0,$ and $\alpha_k \in \mathbb{Q}, \alpha_k > 0.$ It is convenient to define $a_k = 0$ for $k \not \in K.$
As we can see the ``tangency coefficients'' $a_k$ and the ``tangency exponents'' $\alpha_k$ play important roles in Theorems~\ref{Theorem41} and \ref{Theorem42} below.

We now arrive to the first main result of this section. This result provides necessary and sufficient conditions for optimality of nonsmooth semi-algebraic functions.

\begin{theorem}[Necessary and sufficient conditions for optimality] \label{Theorem41}
With the above notations, the following statements hold:
\begin{enumerate}
\item [{\rm (i)}] The point $\bar{x}$ is a local minimizer of $f$ if and only if $a_k \ge 0$ for all $k = 1, \ldots, p.$

\item [{\rm (ii)}] The point $\bar{x}$ is an isolated local minimizer of $f$ if and only if $a_k > 0$ for all $k = 1, \ldots, p.$
\end{enumerate}
\end{theorem}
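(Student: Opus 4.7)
The plan is to reduce the optimality question at $\bar{x}$ to the one-dimensional asymptotics encoded by the $f_k$, using Lemma~\ref{Lemma32} as the bridge. The key observation is that $\bar{x}$ is a local minimizer of $f$ if and only if $\psi(t) \ge f(\bar{x})$ for all sufficiently small $t > 0$, since
\begin{eqnarray*}
f(x) \ \ge \ f(\bar{x}) \ \textrm{ for all } x \in \mathbb{B}_\epsilon(\bar{x}) &\Longleftrightarrow& \min_{x \in \mathbb{S}_t(\bar{x})} f(x) \ = \ \psi(t) \ \ge \ f(\bar{x}) \ \textrm{ for all } t \in (0, \epsilon).
\end{eqnarray*}
An analogous equivalence, with strict inequality on the right, characterizes the isolated property.

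For item (i), I would first invoke Lemma~\ref{Lemma32}(ii) to write $\psi(t) = \min_{k = 1, \ldots, p} f_k(t)$ for all $t \in (0, \epsilon)$. Consequently, $\bar{x}$ is a local minimizer of $f$ if and only if $f_k(t) \ge f(\bar{x})$ for every $k \in \{1, \ldots, p\}$ and every sufficiently small $t > 0$. For indices $k \notin K$, the function $f_k$ is identically equal to $f(\bar{x})$ and $a_k$ is zero by convention, so the condition is automatic. For indices $k \in K$, the Growth Dichotomy expansion
\begin{eqnarray*}
f_k(t) - f(\bar{x}) &=& a_k t^{\alpha_k} + o(t^{\alpha_k}) \quad \textrm{ as } \quad t \to 0^+, \qquad a_k \ne 0, \ \alpha_k > 0,
\end{eqnarray*}
shows that the inequality $f_k(t) \ge f(\bar{x})$ holds for all small $t > 0$ if and only if $a_k > 0$, which, since $a_k \ne 0$, is the same as $a_k \ge 0$. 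Combining the two cases yields (i).

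For item (ii), I would repeat the argument with strict inequalities. The point $\bar{x}$ is an isolated local minimizer of $f$ if and only if $\psi(t) > f(\bar{x})$ for all sufficiently small $t > 0$, equivalently $f_k(t) > f(\bar{x})$ for every $k$ and every small $t > 0$. If $k \notin K$, then $f_k \equiv f(\bar{x})$ so the strict inequality \emph{cannot} hold; hence isolation forces $K = \{1, \ldots, p\}$. For $k \in K$, the expansion again shows that $f_k(t) > f(\bar{x})$ for small $t$ is equivalent to $a_k > 0$. Conversely, if $a_k > 0$ for all $k$, then in particular every $k$ lies in $K$, and the expansion gives the strict inequality. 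Choosing a single $\epsilon$ that works simultaneously for the finitely many one-variable functions $f_k$ poses no difficulty because there are only $p$ of them.

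The proof is essentially a clean packaging of earlier results, so I do not expect a serious obstacle: all the heavy lifting (the Fermat rule, the fact that $\psi$ is realized by a curve inside the tangency variety, and the monotonicity and asymptotic expansion of the $f_k$'s) has already been performed in Section~\ref{SectionPreliminary} and Section~\ref{Section3}. The only mild subtlety is being careful that ``$a_k \ge 0$'' in (i) absorbs both the case $k \notin K$ (where $a_k = 0$ by convention) and the case $k \in K$ (where $a_k \ne 0$ forces $\ge 0$ to coincide with $> 0$); this is why (ii) strengthens $\ge 0$ to $> 0$ uniformly.
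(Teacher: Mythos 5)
Your argument is correct and follows the same route as the paper: reduce local (resp.\ isolated local) minimality at $\bar{x}$ to the condition $\psi(t) \ge f(\bar{x})$ (resp.\ $\psi(t) > f(\bar{x})$) for small $t$, invoke Lemma~\ref{Lemma32}(ii) to replace $\psi$ by $\min_k f_k$, and read off the sign of each $a_k$ from the Growth Dichotomy expansion. The paper's proof is just a terser version of exactly this; your case analysis on $k \in K$ versus $k \notin K$ is the correct way to fill in the details it leaves implicit.
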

\begin{proof}
Recall that
\begin{eqnarray*}
\psi(t) &:=& \min_{x \in \mathbb{S}_t(\bar{x})} f(x) \quad \textrm{ for } \quad t \ge 0.
\end{eqnarray*}
By definition, it is easy to see that $\bar{x}$ is a local minimizer (resp., an isolated local minimizer) of $f$ if and only if for all $t > 0$ small enough, we have
$\psi(t) \ge f(\bar{x})$ (resp., $\psi(t) > f(\bar{x})$). This observation, together with Lemma~\ref{Lemma32}, implies easily the desired conclusion.
\end{proof}

\begin{remark}{\rm
As shown in Section~\ref{Section5} below, when the tangency variety $\Gamma(f)$ is an algebraic curve, the numbers $a_k$ and $\alpha_k$ can be computed using algebraic methods. Very recently, using tangency varieties, Guo and Ph\d{a}m \cite{Guo2018} proposed a computational and symbolic algorithm to determine the type (local minimizer, local maximizer, or saddle point) of a given isolated critical point, which is degenerate, of a multivariate polynomial function. So it is our hope that in the general case, there are algorithms to compute the numbers $a_k$ and $\alpha_k,$ and this will be studied in future work.
}\end{remark}

We know from \L ojasiewicz's inequality \cite[Theorem~1.14]{HaHV2017} that $\bar{x}$ is an isolated local minimizer of $f$ if and only if there exists a real number $\alpha > 0$ such that $\bar{x}$ is an $\alpha$th order sharp local minimizer of $f.$
A characteristic of this number $\alpha$ in terms of the ``tangency exponents'' of $f$ is given in Theorem~\ref{Theorem42} below. To this end, let
\begin{eqnarray*}
\alpha_* &:=& \max_{k \in K} \alpha_k \ > \ 0.
\end{eqnarray*}
The second main result of this section reads as follows.

\begin{theorem}[Isolated local minimizers] \label{Theorem42}
With the above notations, assume that $\bar{x} \in \mathbb{R}^n$ is an isolated local minimizer of $f.$ Then 
for any $\alpha \in \mathbb{R},$ the following statements are equivalent:
\begin{enumerate}
\item[{\rm (i)}] The inequality $\alpha \ge \alpha_*$ holds.

\item[{\rm (ii)}] The point $\bar{x}$ is an $\alpha$th order sharp local minimizer of $f,$ i.e., there exist constants $c > 0$ and $\epsilon > 0$ such that
\begin{eqnarray*}
f(x) &\ge& f(\bar{x}) + c\, \|x - \bar{x}\|^\alpha \quad \textrm{ for all } \quad x \in \mathbb{B}_{\epsilon}(\bar{x}).
\end{eqnarray*}

\item[{\rm (iii)}] The limiting subdifferential $\partial f$ of $f$ is $(\alpha - 1)$th order strongly metrically subregular at $\bar{x}$ for $0,$ i.e.,  there exist constants $c > 0$ and $\epsilon > 0$ such that
\begin{eqnarray*}
\mathfrak{m}_f(x) &\ge& c\, \|x - \bar{x}\|^{\alpha - 1} \quad \textrm{ for all } \quad x \in \mathbb{B}_{\epsilon}(\bar{x}) \setminus \{\bar{x}\}.
\end{eqnarray*}

\item[{\rm (iv)}] The function $f$ satisfies the \L ojaseiwcz gradient inequality at $\bar{x}$ with the exponent $1 - \frac{1}{\alpha},$ i.e., there exist constants $c > 0$ and $\epsilon > 0$ such that
\begin{eqnarray*}
\mathfrak{m}_f(x) &\ge& c\, |f(x) - f(\bar{x})|^{1 - \frac{1}{\alpha}}  \quad \textrm{ for all } \quad x \in \mathbb{B}_{\epsilon}(\bar{x}) \setminus \{\bar{x}\}.
\end{eqnarray*}

\end{enumerate}
\end{theorem}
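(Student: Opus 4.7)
The plan is to route both (iii) and (iv) through (ii), which is linked directly to (i) via the tangency data. Because $\bar{x}$ is an isolated local minimizer, Theorem~\ref{Theorem41}(ii) forces $K=\{1,\dots,p\}$ and $a_k>0$ for every $k$, hence each
\[
 f_k(t)=f(\bar{x})+a_k t^{\alpha_k}+o(t^{\alpha_k}),\qquad a_k>0,\ \alpha_k\le \alpha_*,
\]
and $\psi(t)=\min_k f_k(t)$ by Lemma~\ref{Lemma32}(ii). Moreover $\psi$ is continuous, semi-algebraic, with $\psi(0^+)=f(\bar{x})$ and strictly increasing near $0$ (by the Monotonicity Lemma and the isolated-minimum hypothesis). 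The equivalence (i)$\Leftrightarrow$(ii) is then immediate: when $\alpha\ge\alpha_*$, the expansions give $\psi(t)\ge f(\bar{x})+c\,t^{\alpha_*}\ge f(\bar{x})+c\,t^\alpha$ for small $t>0$, and $f(x)\ge\psi(\|x-\bar{x}\|)$ yields (ii); conversely, testing (ii) at any point of $\Gamma_{k_0}\cap\mathbb{S}_t(\bar{x})$ with $\alpha_{k_0}=\alpha_*$ and comparing leading orders in $t$ forces $\alpha\ge\alpha_*$.

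For (iii)$\Rightarrow$(ii) and (iv)$\Rightarrow$(ii), I use the optimal curve $\phi\colon(0,\epsilon)\to\mathbb{R}^n$ from the proof of Lemma~\ref{Lemma32}(ii): $\|\phi(t)-\bar{x}\|=t$, $f(\phi(t))=\psi(t)$, and $\phi(t)\in\Gamma(f)$, so that $\lambda(t)(\phi(t)-\bar{x})\in\partial f(\phi(t))$ for some scalar function $\lambda$. Lemma~\ref{Lemma210} yields, for a.e.\ $t$,
\[
 \psi'(t)\;=\;(f\circ\phi)'(t)\;=\;\langle\lambda(t)(\phi(t)-\bar{x}),\dot\phi(t)\rangle\;=\;\lambda(t)\,t,
\]
so $\mathfrak{m}_f(\phi(t))\le|\lambda(t)|\,t=\psi'(t)$. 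From (iii) we obtain $\psi'(t)\ge c\,t^{\alpha-1}$ and integrate to get $\psi(t)-f(\bar{x})\ge(c/\alpha)t^\alpha$. From (iv), setting $h(t):=(\psi(t)-f(\bar{x}))^{1/\alpha}$, combining $\psi'(t)\ge\mathfrak{m}_f(\phi(t))$ with (iv) and the chain rule gives $h'(t)\ge c/\alpha$, hence $h(t)\ge(c/\alpha)t$ and thus $\psi(t)-f(\bar{x})\ge(c/\alpha)^\alpha t^\alpha$. In either case $f(x)\ge\psi(\|x-\bar{x}\|)$ yields (ii).

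For (ii)$\Rightarrow$(iii) and (ii)$\Rightarrow$(iv), I argue by contradiction. If (iii) (resp.\ (iv)) fails, the semi-algebraic quotient $Q(x)=\mathfrak{m}_f(x)/\|x-\bar{x}\|^{\alpha-1}$ (resp.\ $Q(x)=\mathfrak{m}_f(x)/(f(x)-f(\bar{x}))^{1-1/\alpha}$) satisfies $\liminf_{x\to\bar{x}}Q(x)=0$. Applying the Monotonicity Lemma and Growth Dichotomy to $q(t):=\inf_{\|x-\bar{x}\|=t}Q(x)$, together with a definable selection (noting that $\mathfrak{m}_f$ is semi-algebraic and lower semi-continuous on the compact sphere), furnishes an analytic semi-algebraic arc $\phi$ converging to $\bar{x}$ with $Q(\phi(\cdot))\to 0$. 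Reparametrizing $\phi$ by arc length $u$, the analytic semi-algebraic expansion of $\phi$ at $\bar{x}$ gives $\|\phi(u)-\bar{x}\|\sim u$ and $\|\dot\phi(u)\|=1$. Choosing $v(u)\in\partial f(\phi(u))$ of minimal norm, Lemma~\ref{Lemma210} yields $|(f\circ\phi)'(u)|\le\mathfrak{m}_f(\phi(u))$. In the (iii)-failure case this bound is $o(u^{\alpha-1})$, integrating to $f(\phi(u))-f(\bar{x})=o(u^\alpha)$; in the (iv)-failure case, with $g(u):=f(\phi(u))-f(\bar{x})$, one computes $|(g^{1/\alpha})'(u)|=o(1)$, whence $g(u)=o(u^\alpha)$. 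Either way, this contradicts (ii) via $\|\phi(u)-\bar{x}\|\sim u$.

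The main technical obstacle is the curve-selection step in the reverse implications: one must realize the infimum of $Q$ over each small sphere through a definable section, and then verify that the arc-length reparametrization satisfies $\|\phi(u)-\bar{x}\|\sim u$ (via the analytic expansion of the semi-algebraic arc at the base point). With those ingredients in place, the rest of the proof is elementary one-variable calculus along $\phi$, with Lemma~\ref{Lemma210} providing the bridge from subdifferential estimates to bounds on $(f\circ\phi)'$.
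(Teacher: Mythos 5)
Your proof is correct, and for (i)$\Leftrightarrow$(ii) and for (iii),(iv)$\Rightarrow$(ii) it follows essentially the paper's route: the Puiseux expansion of $\psi(t)=\min_{x\in\mathbb{S}_t(\bar x)}f(x)$ coming from Lemma~\ref{Lemma32}, and the curve of sphere-minimizers lying in $\Gamma(f)$, along which $\psi'(t)=\lambda(t)\,t\ge \mathfrak{m}_f(\phi(t))$ (you integrate the resulting differential inequalities where the paper compares leading exponents of $\psi$ --- a cosmetic difference). The genuine divergence is in (ii)$\Rightarrow$(iii),(iv). The paper first establishes the nonsmooth Bochnak--\L ojasiewicz inequality $\mathfrak{m}_f(x)\,\|x-\bar x\|\ge c\,|f(x)-f(\bar x)|$ (Lemma~\ref{Lemma41}, itself proved by curve selection), deduces (ii)$\Rightarrow$(iv) by the one-line substitution $\|x-\bar x\|\le (f(x)/c)^{1/\alpha}$, and then obtains (iii) for free from (ii) together with (iv). You instead prove (ii)$\Rightarrow$(iii) and (ii)$\Rightarrow$(iv) separately by contradiction, selecting a definable arc along which the relevant quotient $Q$ tends to $0$ and integrating $|(f\circ\phi)'|\le \mathfrak{m}_f(\phi)\,\|\dot\phi\|$ to contradict the growth condition; your (iv)-branch is essentially the proof of Lemma~\ref{Lemma41} inlined, and your (iii)-branch is an argument the paper never needs to make. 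Both are valid: the paper's factorization is more economical (one curve-selection argument instead of two, and (iii) costs nothing extra), while yours is self-contained and makes the mechanism --- subdifferential bounds integrate to value bounds along definable arcs --- explicit in both branches. Two caveats, both shared with the paper: the strengthened curve selection (a single arc realizing $\liminf Q=0$, not merely $Q<\varepsilon$) needs the monotonicity of $t\mapsto\inf_{\mathbb{S}_t(\bar x)}Q$ near $0$, which you flag and which is handled exactly as in Lemma~\ref{Lemma41}; and your integrations, like the paper's exponent comparisons, tacitly assume $\alpha>0$, so the degenerate range $\alpha\le 0$ is not genuinely covered by either argument.
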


In order to prove Theorem~\ref{Theorem42} below, we need the following result which can be seen as a nonsmooth version of the Bochnack--\L ojasiewicz inequality \cite{Bochnak1971}.

\begin{lemma} \label{Lemma41}
There exist constants $c > 0$ and $\epsilon > 0$ such that
\begin{eqnarray*}
\mathfrak{m}_f (x) \| x - \bar{x}\| & \ge & c |f(x) -  f(\bar{x}) | \quad \textrm{ for all } \quad x \in \mathbb{B}_{\epsilon}(\bar{x}).
\end{eqnarray*}
\end{lemma}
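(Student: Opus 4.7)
The plan is to normalize $\bar x = 0$ and $f(\bar x) = 0$ and argue by contradiction. If the inequality fails for every $c, \epsilon > 0$, I will first build an analytic semi-algebraic curve $\phi(t)$ along which the ``failure ratio''
\begin{equation*}
\eta(t) \ :=\ \frac{\mathfrak{m}_f(\phi(t))\,\|\phi(t)\|}{|f(\phi(t))|}
\end{equation*}
tends to zero, then convert this into a differential inequality on $g(t) := f(\phi(t))$ via Lemma~\ref{Lemma210}, and finally integrate to contradict the semi-algebraic rigidity of $g$.

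\textbf{Curve extraction.} To encode ``for every $c$ and every $\epsilon$'' into a single scalar parameter, I will consider the semi-algebraic set
\begin{equation*}
C \ :=\ \bigl\{(x,\rho) \in \mathbb{R}^n \times (0,1) \ : \ 0<\|x\|<\rho,\ f(x)\ne 0,\ \mathfrak{m}_f(x)\|x\| < \rho |f(x)|\bigr\},
\end{equation*}
whose projection onto the $\rho$-axis contains every small $\rho > 0$ by the contradiction hypothesis; hence the origin is a cluster point of $C$. The Curve Selection Lemma will then produce an analytic semi-algebraic curve $(\phi_0(\tau), \rho_0(\tau)) \in C$ tending to the origin. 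The Monotonicity Lemma forces $\tau \mapsto \|\phi_0(\tau)\|$ to be eventually strictly increasing (it cannot be constant or decreasing given that $\|\phi_0\| \to 0$), and inverting this map yields a reparametrized curve $\phi(t) = \phi_0(\tau(t))$ with $\|\phi(t)\| = t$, $\phi(t) \to 0$, and $\eta(t) \le \rho_0(\tau(t)) \to 0$.

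\textbf{Differential estimate and integration.} Since $\|\phi(t)\| = t$, the Puiseux expansion of $\phi$ will have leading term $v_0 t$ with $\|v_0\| = 1$, so $\|\dot\phi(t)\|$ stays bounded near $0$. Lemma~\ref{Lemma210} will then yield, for all but finitely many $t$ and every $v \in \partial f(\phi(t))$, the identity $(f \circ \phi)'(t) = \langle v, \dot\phi(t) \rangle$; minimizing over $v$ gives $|(f \circ \phi)'(t)| \le \mathfrak{m}_f(\phi(t))\,\|\dot\phi(t)\| = O(\mathfrak{m}_f(\phi(t)))$. Substituting $\mathfrak{m}_f(\phi(t)) = \eta(t)\,|g(t)|/t$ produces the Gronwall-type inequality $|g'(t)|/|g(t)| = o(1/t)$. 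For each fixed $\delta > 0$, integrating $|d\log|g|/dt| \le \delta/t$ on a small right neighborhood of $0$ gives $|g(t)| \ge c_\delta\, t^\delta$. On the other hand, the Growth Dichotomy Lemma applied to the nonzero semi-algebraic function $g$ with $g(t) \to 0$ delivers $|g(t)| \sim |a|\, t^\alpha$ with a fixed $\alpha > 0$; choosing $\delta < \alpha$ contradicts the lower bound.

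\textbf{Main obstacle.} The delicate step is the curve extraction: I need a single curve that simultaneously witnesses the failure \emph{and} admits the radial parametrization $\|\phi(t)\| = t$, so that the Puiseux bound $\|\dot\phi\| = O(1)$ becomes available. Introducing the auxiliary scalar $\rho$ in $C$ and reparametrizing via the Monotonicity Lemma is what threads this needle; once this is in place, the subsequent differential-integral analysis is essentially the classical Bochnak--\L ojasiewicz argument transplanted to the nonsmooth setting via Lemma~\ref{Lemma210}.
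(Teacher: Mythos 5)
Your proof is correct and follows essentially the same route as the paper's: argue by contradiction, extract a semi-algebraic curve witnessing the failure via the Curve Selection Lemma, and use the chain-rule bound $|(f\circ\phi)'(t)|\le \mathfrak{m}_f(\phi(t))\,\|\dot\phi(t)\|$ from Lemma~\ref{Lemma210} together with the Growth Dichotomy Lemma. The only difference is in the finishing move: the paper keeps an arbitrary parametrization and applies Growth Dichotomy to both $\phi$ and $f\circ\phi$ to get $\|\phi(t)\|\simeq t\,\|\dot\phi(t)\|$ and $|(f\circ\phi)(t)|\simeq t\,|(f\circ\phi)'(t)|$ and then chains inequalities directly, whereas you normalize $\|\phi(t)\|=t$ and integrate the resulting log-derivative inequality before invoking Growth Dichotomy on $f\circ\phi$ alone; both close the argument equally well.
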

\begin{proof}
Without loss of generality, we may assume that $\bar{x} = 0$ and $f(\bar{x}) = 0.$

Arguing by contradiction, suppose that the lemma is false, that is,
\begin{eqnarray*}
\liminf_{x \to \bar{x}} \frac{\mathfrak{m}_f(x)\|x\|}{|f(x)|} &=& 0.
\end{eqnarray*}
In light of the Curve Selection Lemma~\ref{CurveSelectionLemma}, we find a non-constant analytic semi-algebraic curve $\phi \colon (0, \epsilon) \rightarrow \mathbb{R}^n$ with $\lim_{{{t}} \to 0^+} \phi({{t}}) = 0$ such that $f \circ \phi({{t}}) \ne 0$ and
\begin{eqnarray*}
\lim_{{{t}} \to 0^+} \frac{\mathfrak{m}_f (\phi({{t}})) \| \phi({{t}})\|}{|f \circ \phi({{t}})|} &=& 0.
\end{eqnarray*}
Since $f$ is continuous at $\bar{x},$ it holds that
\begin{eqnarray*}
\lim_{{{t}} \to 0^+} f \circ \phi({{t}}) &=& 0.
\end{eqnarray*}
By the Growth Dichotomy Lemma~\ref{GrowthDichotomyLemma}, we can write
\begin{eqnarray*}
\phi({{t}}) & = & a {{t}}^\alpha + o({{t}}^\alpha) \quad \textrm{ and } \quad f \circ \phi({{t}}) \  = \ b {{t}}^\beta + o({{t}}^\beta) \quad \textrm{ as } {{t}} \to 0^+,
\end{eqnarray*}
for some $a \in \mathbb{R}^n, a \ne 0, \alpha \in \mathbb{Q}, \alpha > 0, b \in \mathbb{R}, b \ne 0,$ and $\beta \in \mathbb{Q}, \beta > 0.$ It follows that
\begin{eqnarray*}
\dot{\phi}({{t}}) & = & \alpha a {{t}}^{\alpha - 1} + o({{t}}^{\alpha - 1}) \quad \textrm{ and } \quad (f \circ \phi)'({{t}}) \  = \ \beta b {{t}}^{\beta - 1} + o({{t}}^{\beta - 1}) \quad \textrm{ as } {{t}} \to 0^+.
\end{eqnarray*}
Then a direct calculation shows that for all sufficiently small ${{t}} > 0,$
\begin{eqnarray*}
\frac{\alpha}{2} \|\phi({{t}})\|   &\le& \| {{t}} \dot{\phi}({{t}})\|  \ \le \ 2{\alpha} \|\phi({{t}})\| , \\  
\frac{\beta}{2} |f \circ \phi({{t}})| & \le &  |{{t}} (f \circ \phi)'({{t}})| \ \le \ 2{\beta} |f \circ \phi({{t}})|.
\end{eqnarray*}
On the other hand,  we deduce easily from Lemma~\ref{Lemma210} that
\begin{eqnarray*}
|(f \circ \phi)'({{t}})| &\le& \mathfrak{m}_f (\phi({{t}}))  \| \dot{\phi}({{t}}) \|.
\end{eqnarray*}
Therefore,
\begin{eqnarray*}
\frac{\beta}{2}  |f \circ \phi({{t}})| 
& \le & |{{t}} (f \circ \phi)'({{t}})|  \ \le \ \mathfrak{m}_f (\phi({{t}}))  \| {{t}} \dot{\phi}({{t}}) \| \ \le \ 2 \alpha\, \mathfrak{m}_f (\phi({{t}}))  \| {\phi}({{t}}) \|.
\end{eqnarray*}
Consequently, 
\begin{eqnarray*}
0 \ < \ \frac{\beta}{4 \alpha} & \le &  \frac{\mathfrak{m}_f (\phi({{t}})) \|\phi({{t}})\|}{|f \circ \phi({{t}})|}
\end{eqnarray*}
for all sufficiently small ${{t}} > 0.$ Letting ${{t}}$ tend to zero in this inequality, we arrive at a contradiction.
\end{proof}

\begin{proof}[Proof of Theorem~\ref{Theorem42}]
Without loss of generality, assume $\bar{x} = 0$ and $f (\bar{x}) = 0.$ 

By Theorem~\ref{Theorem41}, $K = \{1, \ldots, p\}$ and $a_k > 0$ for all $k \in K.$ Recall that
\begin{eqnarray*}
\psi({{t}})  &:=&  \min_{x \in \mathbb{S}_{t} (\bar{x})} f(x).
\end{eqnarray*}
In light of Lemma~\ref{Lemma32}, we can write
\begin{eqnarray}\label{Eq02}
\psi(t) &=& a_* t^{\alpha_*} + o(t^{\alpha_*}) \quad \textrm{ as } \quad t \to 0^+,
\end{eqnarray}
where $a_*  := \min\{a_k \ | \ k \in K \textrm{ and } \alpha_k = \alpha_*\}.$ In particular, for any real number $c  \in (0,  a_*)$ there exists $\epsilon \in (0, 1)$ such that
\begin{eqnarray} \label{Eq03}
\psi(t) &\ge& c\, t^{\alpha_*} \quad \textrm{ for all } \quad t \in [0, \epsilon].
\end{eqnarray}

(i) $\Leftrightarrow$ (ii):
Assume that $\alpha \ge \alpha_*.$ From \eqref{Eq03} we have for all $x \in \mathbb{B}_{\epsilon}(\bar{x}),$
\begin{eqnarray*}
f(x) &\ge& \psi(\|x\|) \ \ge \ c\,\|x\|^{\alpha_*} \ \ge \ c\, \|x\|^{\alpha},
\end{eqnarray*}
which proves (ii).

Conversely, assume that there exist constants $c' > 0$ and $\epsilon' > 0$ such that
\begin{eqnarray*}
f(x) &\ge& c' \|x\|^\alpha \quad \textrm{ for all } \quad x \in \mathbb{B}_{\epsilon'}(\bar{x}).
\end{eqnarray*}
Then for all $t \in [0, \epsilon]$ we have
\begin{eqnarray*}
\psi(t) &=& \min_{x \in \mathbb{S}_{t} (\bar{x})} f(x) \ \ge \ c' t^\alpha.
\end{eqnarray*}
Combining this with \eqref{Eq02} we get $\alpha \ge \alpha_*.$

(iv) $\Rightarrow$ (iii) $\Rightarrow$ (ii): Clearly, the condition (iii) holds provided that both the conditions (ii) and (iv) hold. So it suffices to show the implications (iii) $\Rightarrow$ (ii) and (iv) $\Rightarrow$ (ii). 

Note that the minimum in the definition of $\psi$ is attained. In view of the Curve Selection Lemma~\ref{CurveSelectionLemma}, there is an analytic semi-algebraic curve $\phi \colon (0, \epsilon) \rightarrow \mathbb{R}^n$ such that $\|\phi({{t}})\| = {{t}}$ and $f \circ \phi({{t}}) = \psi({{t}})$ for all ${{t}}.$ Applying Lemma~\ref{Lemma210} and shrinking $\epsilon$ (if necessary), we have for any ${{t}} \in (0, \epsilon),$
\begin{eqnarray}
v \in \partial f(\phi({{t}})) & \Longrightarrow & \langle v, \dot{\phi}({{t}}) \rangle \ = \ \psi'({{t}}), \label{Eq04} \\
v \in \partial^\infty f(\phi({{t}})) & \Longrightarrow & \langle v, \dot{\phi}({{t}}) \rangle \ = \ 0 \nonumber.
\end{eqnarray}
In particular, as in the proof of Lemma~\ref{Lemma32}, we have $\phi(t) \in \Gamma(f),$ i.e.,  there is a real number $\lambda({{t}})$ satisfying
\begin{eqnarray}\label{Eq05}
\lambda({{t}}) \phi({{t}}) &\in& \partial f(\phi({{t}})).
\end{eqnarray}
By definition, then
\begin{eqnarray*}
\|\lambda({{t}}) \phi({{t}})\| &\ge& \mathfrak{m}_f(\phi({{t}})).
\end{eqnarray*}
Furthermore, it follows from \eqref{Eq04} and \eqref{Eq05} that
\begin{eqnarray*}
\psi'({{t}}) &=& \lambda({{t}}) \langle \phi({{t}}), \dot{\phi}({{t}}) \rangle \ = \
\lambda({{t}}) \frac{1}{2} \frac{d}{d {{t}}} \|\phi({{t}})\|^2 \ = \ \lambda({{t}})  {{t}}.
\end{eqnarray*}
Consequently, 
\begin{eqnarray*}
|\psi'({{t}})| &=& |\lambda({{t}})  {{t}}| \ = \ \|\lambda({{t}}) \phi({{t}})\| \ \ge \ \mathfrak{m}_f(\phi({{t}})) .
\end{eqnarray*}
Therefore, if the condition (iii) holds, then $|\psi'({{t}})| \ge c\, {{t}}^{\alpha - 1},$ while if the condition (iv) holds, then $|\psi'({{t}})| \ge c\, (\psi({{t}}))^{1 - \frac{1}{\alpha}};$ in both the cases, we get $\alpha \ge \alpha_*$ and so $\psi({{t}}) \ge c' {{t}}^{\alpha}$ for some constant $c' > 0.$
Therefore the condition (ii) holds.

(ii) $\Rightarrow$ (iv):  By assumption, there exist constants $c > 0$ and $\epsilon > 0$ such that
\begin{eqnarray*}
f(x) &\ge& c\, \|x\|^\alpha \quad \textrm{ for all } \quad x \in \mathbb{B}_{\epsilon}(\bar{x}).
\end{eqnarray*}
On the other hand, applying Lemma~\ref{Lemma41}, we deduce that there exist constants $c' > 0$ and $\epsilon' > 0$ such that
\begin{eqnarray*}
\|x\| \mathfrak{m}_f(x) &\ge& c' |f(x)| \quad \textrm{ for all } \quad x \in \mathbb{B}_{\epsilon'}(\bar{x}).
\end{eqnarray*}
Therefore, the inequality
\begin{eqnarray*}
\left(\frac{1}{c}\, f(x) \right)^{\frac{1}{\alpha}} \mathfrak{m}_f(x) &\ge& c' |f(x)|
\end{eqnarray*}
holds for all $x$ near $\bar{x},$ from which the desired conclusion follows.
\end{proof}

From \cite[Example~3.2]{Drusvyatskiy2015} we know that the implication (ii) $\Rightarrow$ (iii), and hence the implication (ii) $\Rightarrow$ (iv), of Theorem~\ref{Theorem42} may easily fail in absence of continuity. The following example shows that the implication (iii) $\Rightarrow$ (iv) of Theorem~\ref{Theorem42} also may fail in absence of continuity.

\begin{example}{\rm
Consider the lower semi-continuous, semi-algebraic function $f \colon \mathbb{R} \rightarrow \mathbb{R}$ defined by
$$f (x) :=
\begin{cases}
1 + x^{2}  & \textrm{ if }  x < 0,\\
x^{2}  & \textrm{ otherwise.}
\end{cases}$$
Observe that $f$ is not continuous at $\bar{x} = 0$ and that $0$ is a second order sharp local minimizer of $f.$
A simple computation shows that
\begin{eqnarray*}
\frak{m}_f(x) &=& 2|x| \quad \textrm{ for all } \quad x \in \mathbb{R},
\end{eqnarray*}
and so the condition (iii) of Theorem~\ref{Theorem42} holds with $\alpha = 2.$ However, it is easy to check that $f$ does not satisfy the condition (iv) of Theorem~\ref{Theorem42}.
}\end{example}

\begin{remark}{\rm
Consider a lower semicontinuous function $f \colon \mathbb{R}^n \rightarrow {\mathbb{R}},$ which has a (not necessarily isolated) local minimum at $\bar{x} \in \mathbb{R}^n.$ It is well-known (see \cite{AragonArtacho2008, AragonArtacho2014, Drusvyatskiy2014, Drusvyatskiy2015, Mordukhovich2015, Zheng2015}) that the existence of constants $c > 0$ and $\epsilon > 0$ such that
\begin{eqnarray*}
\mathfrak{m}_f(x) &\ge& c \, \mathrm{dist}\big(x, (\partial f)^{-1}(0)\big) \quad \textrm{ for all } \quad x \in \mathbb{B}_{\epsilon}(\bar{x})
\end{eqnarray*}
implies the existence of constants $c' > 0$ and $\epsilon' > 0$ satisfying
\begin{eqnarray*}
f(x) &\ge& f(\bar{x}) + c'\, \mathrm{dist}\big(x, (\partial f)^{-1}(0)\big)^{2} \quad \textrm{ for all } \quad x \in \mathbb{B}_{\epsilon'}(\bar{x}),
\end{eqnarray*}
where $\mathrm{dist}\big(x, (\partial f)^{-1}(0)\big)$ stands for the Euclidean distance from $x$ to $(\partial f)^{-1}(0).$ In \cite[Remark~3.4]{Drusvyatskiy2014},  Drusvyatskiy and Ioffe conjectured that the converse is also true for semi-algebraic functions.
The next example shows that this conjecture does not hold in general.
}\end{remark}

\begin{example}\label{Counterexample}{\rm
Let $f \colon \mathbb{R}^2 \rightarrow \mathbb{R}, ({x}, {y}) \mapsto f(x, y),$ be the continuous semi-algebraic function defined by $f(x, y) := |{x}^2 - {y}^4|.$ A direct calculation shows that
$$\partial f(x, y) =
\begin{cases}
\{(2{x}, - 4{y}^3)\} & \textrm{ if  } \quad {x}^2 - {y}^4 > 0, \\
\{(-2{x}, 4{y}^3)\} & \textrm{ if  } \quad {x}^2 - {y}^4 < 0, \\
\{(2(2t - 1){x}, - 4(2t - 1){y}^3) \ | \ t \in [0, 1] \} & \textrm{ otherwise.}
\end{cases}$$
In particular, we have
\begin{eqnarray*}
f^{-1}(0) &=& (\partial f)^{-1}(0) \ = \ \{({x}, {y}) \in \mathbb{R}^2 \ | \ {x}^2 - {y}^4 = 0\}.
\end{eqnarray*}
Let $P(x, y) := x^2 - y^4.$ According to Kuo's work \cite[Corollaries~1~and~2]{Kuo1974} (see also \cite{HaHV2018}), we find constants $c' > 0$ and $\epsilon' > 0$ such that
\begin{eqnarray*}
|P(x, y)| &\ge& c'\, \mathrm{dist} ((x, y), P^{-1}(0))^2  \quad \textrm{ for all } \quad \|(x, y)\| \le \epsilon'.
\end{eqnarray*}
Since $f$ is just the absolute of $P,$ it holds that
\begin{eqnarray*}
f(x, y) &\ge& c'\, \mathrm{dist} ((x, y), (\partial f)^{-1}(0))^2  \quad \textrm{ for all } \quad \|(x, y)\| \le \epsilon'.
\end{eqnarray*}

On the other hand, for all $t \in \mathbb{R}$ we have
\begin{eqnarray*}
\mathrm{dist}\big ((0, {{t}}), (\partial f)^{-1}(0) \big) 
& = & \mathrm{dist}\big ((0, {{t}}), \{({x}, {y}) \in \mathbb{R}^2 \ | \ {x}^2 - {y}^4 = 0\} \big) \\
& = & \mathrm{dist}\big ((0, {{t}}), \{({x}, {y}) \in \mathbb{R}^2 \ | \ {x} - {y}^2 = 0\} \big) \\
& = & \min\{\big (x^2 + (y - t)^2 \big)^{1/2} \ | \ {x} - {y}^2 = 0\}  \\
& = & \min\{\big (y^4 + (y - t)^2 \big)^{1/2} \ | \ y \in \mathbb{R} \}.
\end{eqnarray*}
Let $g(t, y) := y^4 + (y - t)^2.$ Then it is easy to see that for each $t \in \mathbb{R},$ the function $\mathbb{R} \to \mathbb{R}, y \mapsto g(t, y),$ is a convex polynomial, and so it has a unique minimizer, say, $y(t).$ Clearly, $y(0) = 0$ and $\frac{\partial g}{\partial y}(t, y(t)) = 0$ for all $t.$ Note that $\frac{\partial g}{\partial y}g(0, 0) = 0$ and
$\frac{\partial^2 g}{\partial y^2}g(0, 0) = 2 \ne 0.$ By the implicit function theorem, then $y = y(t)$ is an analytic function on an open interval containing $0 \in \mathbb{R},$ and so we can write\footnote{Using the software Maple, it is easy to see that $y(t) = t - 2t^3 + o(t^3).$}
\begin{eqnarray*}
y(t) &=& a_1 t + a_2 t^2 + o(t^2) \quad \textrm{as} \quad t \to 0,
\end{eqnarray*}
for some $a_1, a_2 \in \mathbb{R}.$ Since $\frac{\partial g}{\partial y}(t, y(t)) \equiv 0,$ it follows easily that $a_1 = 1$ and $a_2 = 0.$ Consequently,
\begin{eqnarray*}
\mathrm{dist}\big ((0, {{t}}), (\partial f)^{-1}(0) \big) 
& = & \sqrt{g(t, y(t))} \ = \ t^2 + o(t^2)  \quad \textrm{ as } \quad t \to 0.
\end{eqnarray*} 
Therefore, 
\begin{eqnarray*}
\lim_{{{t}} \to 0} \frac{\mathfrak{m}_f(0, {{t}})} {\mathrm{dist}\big ((0, {{t}}), (\partial f)^{-1}(0) \big)} 
&=& \lim_{{{t}} \to 0} \frac{4t^3} {t^2 + o(t^2)}  \ = \  0,
\end{eqnarray*}
which implies that there are no constants $c > 0$ and $\epsilon > 0$ such that
\begin{eqnarray*}
\mathfrak{m}_f(x, y) &\ge& c\, \mathrm{dist}\big((x, y), (\partial f)^{-1}(0)\big)  \quad \textrm{ for all } \quad \|(x, y)\| \le \epsilon.
\end{eqnarray*}
Consequently, there are no constants $c > 0$ and $\epsilon > 0$ such that
\begin{eqnarray*}
\mathfrak{m}_f(x, y) &\ge& c\, |f(x, y)|^{\frac{1}{2}}  \quad \textrm{ for all } \quad \|(x, y)\| \le \epsilon.
\end{eqnarray*}
}\end{example}

The next corollary determines constants, which correspond to sharp local minimizers.

\begin{corollary}
Under the assumptions of Theorem~\ref{Theorem42}, suppose that $\alpha \ge \alpha_*.$ Then for any constant $c  \in (0,  a_*)$ there exists $\epsilon > 0$ such that
\begin{eqnarray*}
f(x) &\ge& f(\bar{x}) + c \|x - \bar{x}\|^\alpha \quad \textrm{ for all } \quad x \in \mathbb{B}_{\epsilon}(\bar{x}),
\end{eqnarray*}
where $a_*  :=  \min\{a_k \ | \ k \in K \textrm{ and } \alpha_k = \alpha_*\}.$
\end{corollary}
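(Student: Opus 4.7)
The plan is to refine the implication (i) $\Rightarrow$ (ii) in the proof of Theorem~\ref{Theorem42}, tracking the leading constant explicitly instead of absorbing it. After normalizing $\bar{x} = 0$ and $f(\bar{x}) = 0$, I would begin by recalling the expansion~\eqref{Eq02}, namely
\[
\psi(t) \ = \ a_* t^{\alpha_*} + o(t^{\alpha_*}) \quad \text{as } t \to 0^+,
\]
where $\psi(t) := \min_{x \in \mathbb{S}_t(\bar{x})} f(x)$ and $a_* = \min\{a_k \mid k \in K,\ \alpha_k = \alpha_*\}$. This expansion is immediate from Lemma~\ref{Lemma32}(ii) combined with the Growth Dichotomy Lemma~\ref{GrowthDichotomyLemma} applied to each $f_k$: only those $f_k$ whose exponent equals $\alpha_*$ contribute to the leading term of $\min_k f_k$, and among those the smallest coefficient is $a_*$.

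Next, given any $c \in (0, a_*)$, set $\eta := a_* - c > 0$. By the very definition of the little-$o$ symbol there exists $\epsilon \in (0, 1]$ small enough that $|\psi(t) - a_* t^{\alpha_*}| \le \eta\, t^{\alpha_*}$ for every $t \in [0, \epsilon]$, whence $\psi(t) \ge (a_* - \eta)\, t^{\alpha_*} = c\, t^{\alpha_*}$ on that interval. Invoking the hypothesis $\alpha \ge \alpha_*$ together with $t \le \epsilon \le 1$, the elementary monotonicity $t^{\alpha_*} \ge t^\alpha$ then yields $\psi(t) \ge c\, t^\alpha$ on $[0, \epsilon]$. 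Finally, for any $x \in \mathbb{B}_{\epsilon}(\bar{x})$, the tautological inequality $f(x) \ge \psi(\|x - \bar{x}\|)$ gives $f(x) \ge c\, \|x - \bar{x}\|^\alpha$, and restoring $f(\bar{x})$ produces the desired inequality.

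The argument requires no new analytical ingredient beyond those already assembled in the proof of Theorem~\ref{Theorem42}; the corollary is essentially a quantitative restatement of (i) $\Rightarrow$ (ii) making explicit that $a_*$ is the optimal leading constant. The only point needing care, which is pure bookkeeping, is to ensure that the threshold $\epsilon$ can be taken in $(0,1]$ so that $t^{\alpha_*} \ge t^{\alpha}$ is available — this is automatic since \eqref{Eq02} governs arbitrarily small $t$. I do not foresee any genuine obstacle here.
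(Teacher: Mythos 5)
Your proposal is correct and is essentially identical to the paper's own (one-line) proof, which likewise just points back to the expansion $\psi(t) = a_* t^{\alpha_*} + o(t^{\alpha_*})$ and the resulting bound $\psi(t) \ge c\, t^{\alpha_*} \ge c\, t^{\alpha}$ for $t \in [0,\epsilon]$ with $\epsilon < 1$, established at the start of the proof of Theorem~\ref{Theorem42}. Your explicit bookkeeping of the constant $c \in (0, a_*)$ via $\eta = a_* - c$ is exactly what the paper leaves implicit.
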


\begin{proof}
This follows immediately from the argument given at the beginning of the proof of Theorem~\ref{Theorem42}.
\end{proof}

We finish this section with the following remark.

\begin{remark}{\rm
Let $\mathscr{L}_1$ and $\mathscr{L}_2$ be the smallest possible exponents $\alpha$ and $\theta,$ respectively, for which there exist positive constants $c$ and $\epsilon$ such that for all $x \in \mathbb{B}_\epsilon(\bar{x})$ the following inequalities hold:
\begin{eqnarray*}
|f(x) - f(\bar{x})|  &\ge&  c\, \mathrm{dist}\big(x, f^{-1}(0)\big)^{\alpha} \quad \textrm{ and } \quad  \mathfrak{m}_f(x) \ \ge \  c \, |f(x) - f(\bar{x})|^{\theta}.
\end{eqnarray*}
It is well-known (see, for example, \cite[Lemma~3.3]{HaHV2017}) that
\begin{eqnarray*}
\mathscr{L}_2 &\ge& 1 - \frac{1}{\mathscr{L}_1},
\end{eqnarray*}
and the inequality may be strict (for instance, we have $\mathscr{L}_1 = 2$ and $\mathscr{L}_2 > \frac{1}{2}$ for the function $f$ in Example~\ref{Counterexample}). On the other hand, if $\bar{x}$ is an isolated local minimizer of $f,$ then it follows from Theorem~\ref{Theorem42} that the (\L ojasiewicz) exponents $\mathscr{L}_1$ and $\mathscr{L}_2$ can be computed in terms of the tangency variety of $f$:
\begin{eqnarray*}
\mathscr{L}_1 &=& \alpha_* \quad \textrm{ and } \quad \mathscr{L}_2  \ = \ 1 - \frac{1}{\alpha_*}.
\end{eqnarray*}
Also note that there are formulas computing the exponents $\mathscr{L}_1$ and $\mathscr{L}_2$ when $f$ is an analytic function in two variables, see \cite{HaHV2018, Kuo1974, Nguyen2017}. So it would be interesting to compute these exponents in the general case. This question will be explored in our future research work.
}\end{remark}

\section{Examples} \label{Section5}

In this section we will provide an algorithmical method to find all the numbers $a_k$ and $\alpha_k$ of a given polynomial in two variables and to identify the kind of phenomena which occur for a given point: saddle point, local minimizer, isolated local minimizer. The method is as follows: Assume that $f$ is a polynomial function in two variables $(x, y) \in \mathbb{R}^2$ with coefficients in $\mathbb{Q}.$ For simplicity, we will assume that the point of interest is the origin $(0, 0) \in \mathbb{R}^2.$ By definition, then $\Gamma(f) = \{(x, y) \in \mathbb{R}^2 \ | \ g(x, y) = 0\},$ where $g \colon \mathbb{R}^2 \to \mathbb{R}$ is the polynomial function defined by
\begin{eqnarray*}
g(x, y) &:=& y \frac{\partial f}{\partial x} - x \frac{\partial f}{\partial y}.
\end{eqnarray*}
In particular, the tangency variety $\Gamma(f)$ is a curve; so are the components $\Gamma_1, \ldots, \Gamma_k.$

For each $k = 1, \ldots, p,$ let $\phi_k \colon (-\delta, \delta) \rightarrow \mathbb{R}^2$ be an analytic curve  such that $\phi_k(0) = 0$ and $\phi_k  \big ((0, \delta) \big) = \Gamma_k.$ We can write
\begin{eqnarray*}
\|\phi_k(t)\| &=& c_k t^{m_k} + o(t^{m_k}) \quad \textrm{ as } \quad t \to 0^+,
\end{eqnarray*}
where $c_k$ is a positive constant and $m_k$ is a positive integer. For $\delta > 0$ small enough, the function $(0, \delta) \to \mathbb{R}, t \mapsto \|\phi_k(t)\|,$ is strictly increasing, so it has an inverse function, say, $t = \psi_k(s).$ Then for all $s > 0$ small enough we have $\phi_k \circ \psi_k(s) \in \Gamma_k,$ $\|\phi_k \circ \psi_k(s)\| = s,$ and
\begin{eqnarray*}
\psi_k(s) &=& c_k^{-\frac{1}{m_k}} s^{\frac{1}{m_k}}  + o(s^{\frac{1}{m_k}}) \quad \textrm{ as } \quad  s \to 0^+.
\end{eqnarray*}

If $k \not \in K,$ then $f \circ \phi_k (t)  = f(0, 0)$ for all $t \in (0, \delta).$ Assume that $k \in K.$ We have
\begin{eqnarray*}
f \circ \phi_k  (t) &=& f(0, 0) + \tilde{a}_k t^{\tilde{\alpha}_k} + o(t^{\tilde{\alpha}_k})  \quad \textrm{ as } \quad t \to 0^+,
\end{eqnarray*}
where $\tilde{a}_k \in \mathbb{R}, \tilde{a}_k \ne 0,$ and $\tilde{\alpha}_k \in \mathbb{N}, \tilde{\alpha}_k > 0.$ By substituting $t = \psi_k(s)$ in the above expression, we get
\begin{eqnarray*}
f \circ \phi_k  \circ \psi_k (s) &=& f(0, 0) + \tilde{a}_k c_k^{-\frac{\tilde{\alpha}_k}{m_k}} 
s^{\frac{\tilde{\alpha}_k}{m_k}} + o(s^{\frac{\tilde{\alpha}_k}{m_k}})  \quad \textrm{ as } \quad s \to 0^+.
\end{eqnarray*}
Consequently, the following relations hold:
\begin{eqnarray}\label{Eq06}
a_k &=& \tilde{a}_k c_k^{-\frac{\tilde{\alpha}_k}{m_k}} \quad \textrm{ and } \quad \alpha_k \ = \ \frac{\tilde{\alpha}_k}{m_k}.
\end{eqnarray}

Now we perform the following steps:
\begin{itemize}
\item If the polynomial $g$ is not regular in $y,$ make a linear change of coordinates so that it becomes regular in $y.$\footnote{Write $g = g_m + g_{m + 1} + \cdots,$ where $g_m \not \equiv 0$ and each $g_k, k \ge m$ is a homogeneous polynomial of degree $k.$ Then $g$ is said to be {\em regular} in $y$ (of order $m$) if $g_m(0, 1) \ne 0.$ It is not hard to see that for almost all linear mappings $L$ from $\mathbb{R}^2$ into itself, the compose function $\mathbb{R}^2 \to \mathbb{R}, (x, y) \mapsto g \circ L(x, y),$ is regular in $y.$}

\item If $g$ is not square-free, factor $g = g_1^{n_1} \cdots g_l^{n_l},$ where all $g_i$ are square-free polynomials with coefficients in $\mathbb{Q}$ and all $n_i$ are positive integers.\footnote{A polynomial is called {\em square-free} if it does not have multiple factors.} This can be done by  greatest common divisor computations.

\item Compute the Puiseux expansions of the solutions for $y$ of the equation $g = 0$ (or $g_1 \cdots g_l = 0$ if $g$ is not square-free), as $x \to 0.$ This can be done using rational Puiseux expansions over $\mathbb{Q}$ (cf. \cite{Duval1989}), and we get solutions of the form $(x = c t^m; y = y(t)),$ where $c$ is a nonzero constant in $\mathbb{Q},$ $m$ is a positive integer, and $y(t)$ is a power series in $t$ with coefficients in a finite algebraic extension of $\mathbb{Q}.$ 

\item Construct the ordered lists of all components $\Gamma_k$ of $\Gamma(f).$ This can be done by finding the real branches, which means all Puiseux expansions with real coefficients. 

\item For each component $\Gamma_k,$ compute the numbers $\tilde{a}_k$ and $\tilde{\alpha}_k.$ This can be done by substituting the Puiseux expansions in the polynomial $f.$ 
\item Finally, the numbers ${a}_k$ and ${\alpha}_k$ are obtained by using \eqref{Eq06}.
\end{itemize}

Then we have all the information needed to apply Theorems \ref{Theorem41} and \ref{Theorem42}.

The computations can be performed with the software Maple, using the command ``puiseux" of the package ``algcurves" for the rational Puiseux expansions.

\begin{example}{\rm
Let $f(x, y) := 2\,{x}^{5}y+{x}^{4}-{y}^{3}+xy.$ By definition, $\Gamma(f) = g^{-1}(0),$ where
\begin{eqnarray*}
g(x, y) &:=& -2\,{x}^{6}+10\,{x}^{4}{y}^{2}+4\,{x}^{3}y+3\,x{y}^{2}-{x}^{2}+{y}^{2}.
\end{eqnarray*}
Since $g$ is regular in $y,$ we can compute the Puiseux expansions of the solutions of $g = 0$ and put them in a list. 
\begin{verbatim}
> PG := convert(puiseux(g, x = 0, y, 5, t), list);
\end{verbatim}

$\qquad \Big[[
x = t, y = -t + \frac{3}{2}\,{t}^{2} -{\frac {43}{8}}\,{t}^{3}+ {\frac {231}{16}}\,{t}^{4}], \qquad 
[x = t, y = t-\frac{3}{2}\,{t}^{2} +{\frac {11}{8}}\,{t}^{3}-{\frac {39}{16}}\,{t}^{4}] \Big].$

We next substitute these expansions in $f.$
\begin{verbatim}
> series(algsubs(x = t, algsubs(PG[1, 2], f)), t = 0, 5);
\end{verbatim}

$\qquad -{t}^{2}+{\frac {5}{2}}{t}^{3}-{\frac {71}{8}}{t}^{4}+O \left( {t}^{5} \right).$

\begin{verbatim}
> series(algsubs(x = t, algsubs(PG[2, 2], f)), t = 0, 5);
\end{verbatim}

$\qquad {t}^{2}-{\frac {5}{2}}{t}^{3}+{\frac {55}{8}}{t}^{4}+O \left( {t}^{5}\right).$

From these computations we can see that for sufficiently small $\epsilon > 0,$ the set $\Gamma(f) \cap \mathbb{B}_\epsilon \setminus \{(0, 0)\}$ has four connected components $\Gamma_{\pm 1}$ and $\Gamma_{\pm 2},$ which are given, respectively, by the following parametrizations:
\begin{eqnarray*}
\phi_{\pm 1}(t) &:=& \left(t, -t + \frac{3}{2}\,{t}^{2} -{\frac {43}{8}}\,{t}^{3}+ {\frac {231}{16}}\,{t}^{4} + o \left( {t}^{4} \right)\right),\\
\phi_{\pm 2}(t) &:=& \left(t, t-\frac{3}{2}\,{t}^{2} +{\frac {11}{8}}\,{t}^{3}-{\frac {39}{16}}\,{t}^{4} + o \left( {t}^{4} \right)\right),
\end{eqnarray*}
where $t \to 0^{\pm}.$ It is clear that $\|\phi_{\pm k}(t)\| = \sqrt{2}t + o(t)$ for $k = 1, 2,$ which yields $c_{\pm k} = \sqrt{2}$ and $m_{\pm k} = 1.$ Furthermore, we have 
\begin{eqnarray*}
f \circ \phi_{\pm 1}(t) & = & -{t}^{2}+{\frac {5}{2}}{t}^{3}-{\frac {71}{8}}{t}^{4}+O \left( {t}^{5} \right) , \\
f \circ \phi_{\pm 2}(t)  & = & {t}^{2}-{\frac {5}{2}}{t}^{3}+{\frac {55}{8}}{t}^{4}+O \left( {t}^{5}\right).
\end{eqnarray*}
It follows that $K = \{\pm 1, \pm 2\}$ and
\begin{eqnarray*}
\tilde{a}_{\pm 1} &=&  - 1\quad  \textrm{ and } \quad \tilde{a}_{\pm 2} \ = \  1,\\
\tilde{\alpha}_{\pm 1} &=& \ \ \,2 \quad  \textrm{ and } \quad \tilde{\alpha}_{\pm 2} \ = \  2.
\end{eqnarray*}
By \eqref{Eq06}, then
\begin{eqnarray*}
a_{\pm 1} &=& - \frac{1}{2} \quad  \textrm{ and } \quad a_{\pm 2} \ = \  \frac{1}{2},\\
\alpha_{\pm 1} &=& \quad 2 \quad  \textrm{ and } \quad \alpha_{\pm 2} \ = \  2.
\end{eqnarray*}
Since $a_{\pm 1} < 0 < a_{\pm 2},$ we deduce from Theorem~\ref{Theorem41} that the origin is a saddle point of $f.$ 
}\end{example}

\begin{example}{\rm
Let $f(x, y) := x^2(x^2y^2+1).$ We have $\Gamma(f) = g^{-1}(0),$ where
\begin{eqnarray*}
g(x, y) &:=& -2x^5y+4x^3y^3+2xy.
\end{eqnarray*}
Since $g$ is not regular in $y,$ we first perform the linear change of coordinates; for example, let $x := X + Y$ and $y := X - Y.$ Here is the Maple code:
\begin{verbatim}
> F := subs({x = X + Y, y = X - Y}, f); 
\end{verbatim}
$\qquad {X}^{6}+2\,Y{X}^{5}-{X}^{4}{Y}^{2}-4\,{Y}^{3}{X}^{3}-{X}^{2}{Y}^{4}+2
\,X{Y}^{5}+{Y}^{6}+{X}^{2}+2\,XY+{Y}^{2}.$
\begin{verbatim}
> G := subs({x = X + Y, y = X - Y}, g);
\end{verbatim}
$\qquad 2\,{X}^{6}-8\,Y{X}^{5}-22\,{X}^{4}{Y}^{2}+22\,{X}^{2}{Y}^{4}+8\,X{Y}^{
5}-2\,{Y}^{6}+2\,{X}^{2}-2\,{Y}^{2}.$

We next compute the Puiseux expansions of the solutions of $G = 0$ and put them in a list. 
\begin{verbatim}
> PG := convert(puiseux(G, X = 0, Y, 5, t), list);
\end{verbatim}

$\qquad \Big[[X = t, Y = t], \qquad [X = t, Y = -t],$

$\qquad  \ [X=t,Y=-12\,{t}^{4}{\it RootOf} \left( {{\it \_Z}}^{4}+1 \right) -8\,{
t}^{3} \left( {\it RootOf} \left( {{\it \_Z}}^{4}+1 \right)  \right) ^{2}$

$\hfill -4\,{t}^{2} \left( {\it RootOf} \left( {{\it \_Z}}^{4}+1 \right) 
 \right) ^{3}+t+{\it RootOf} \left( {{\it \_Z}}^{4}+1 \right)] \Big].$

Since the third expansion is not real (and is not zero at $t = 0),$ we only substitute the first two expansions in $F.$
\begin{verbatim}
> series(algsubs(X = t, algsubs(PG[1, 2], F)), t = 0, 5);
\end{verbatim}

$\qquad 4t^2.$

\begin{verbatim}
> series(algsubs(X = t, algsubs(PG[2, 2], F)), t = 0, 5);
\end{verbatim}

$\qquad 0.$

From these computations we can see that for sufficiently small $\epsilon > 0,$ the set $\Gamma(f) \cap \mathbb{B}_\epsilon \setminus \{(0, 0)\}$ has four connected components $\Gamma_{\pm 1}$ and $\Gamma_{\pm 2},$ which are given, respectively, by the following parametrizations:
\begin{eqnarray*}
\phi_{\pm 1}(t) &:=& \left(\pm 2t, 0 \right) \quad \textrm{ and} \quad \phi_{\pm 2}(t) \ := \ \left(0, \pm 2t \right).
\end{eqnarray*}
Clearly, $\|\phi_{\pm k}(t)\| = 2t$ for $k = 1, 2,$ and so $c_{\pm k} = {2}$ and $m_{\pm k} = 1.$ Furthermore, we have
\begin{eqnarray*}
f \circ \phi_{\pm 1}(t) &=& 4t^2 \quad \textrm{ and} \quad f \circ \phi_{\pm 2}(t) \ = \  0.
\end{eqnarray*}
It follows that $K = \{\pm 1\}$ and
\begin{eqnarray*}
\tilde{a}_{\pm 1} &=&  4 \quad  \textrm{ and } \quad \tilde{\alpha}_{\pm 1} \ = \ 2.
\end{eqnarray*}
From \eqref{Eq06} we obtain
\begin{eqnarray*}
a_{\pm 1} &=& 1 \quad  \textrm{ and } \quad a_{\pm 2} \ = \  0,\\
\alpha_{\pm 1} &=& 2.
\end{eqnarray*}
By Theorem~\ref{Theorem41}, the origin is a nonisolated local minimizer of $f.$ 
}\end{example}

\begin{example}{\rm
Let $f(x, y) := -{x}^{7}{y}^{5}+2\,{y}^{4}+{x}^{2}.$ We have $\Gamma(f) = g^{-1}(0),$ where
\begin{eqnarray*}
g(x, y) &:=& 5\,{x}^{8}{y}^{4}-7\,{x}^{6}{y}^{6}-8\,x{y}^{3}+2\,xy.
\end{eqnarray*}
Then by similar computations as in the above example, it is easy to see that for sufficiently small $\epsilon > 0,$ the set $\Gamma(f) \cap \mathbb{B}_\epsilon \setminus \{(0, 0)\}$ has four connected components $\Gamma_{\pm 1}$ and $\Gamma_{\pm 2},$ which are given, respectively, by the following parametrizations:
\begin{eqnarray*}
\phi_{\pm 1}(t) &:=& \left(\pm 2t, 0 \right) \quad \textrm{ and} \quad  \phi_{\pm 2}(t) \ := \ \left(0, \pm 2t \right).
\end{eqnarray*}
It is clear that $\|\phi_{\pm k}(t)\| = 2t$ for $k = 1, 2,$ and so $c_{\pm k} = {2}$ and $m_{\pm k} = 1.$ Furthermore, we have
\begin{eqnarray*}
f \circ \phi_{\pm 1}(t) &=& 4t^2 \quad \textrm{ and} \quad  f \circ \phi_{\pm 2}(t) \ = \ 32t^4.
\end{eqnarray*}
It follows that $K = \{\pm 1, \pm 2\}$ and
\begin{eqnarray*}
\tilde{a}_{\pm 1} &=&  4 \quad  \textrm{ and } \quad \tilde{a}_{\pm 2} \ = \  32,\\
\tilde{\alpha}_{\pm 1} &=& 2 \quad  \textrm{ and } \quad \tilde{\alpha}_{\pm 2} \ = \  4.
\end{eqnarray*}
From \eqref{Eq06} we obtain
\begin{eqnarray*}
a_{\pm 1} &=& 1 \quad  \textrm{ and } \quad a_{\pm 2} \ = \  2,\\
\alpha_{\pm 1} &=& 2 \quad  \textrm{ and } \quad \alpha_{\pm 2} \ = \  4.
\end{eqnarray*}
By Theorems~\ref{Theorem41} and \ref{Theorem42}, the origin is an $\alpha$th order sharp local minimizer of $f$ for all $\alpha \ge \alpha_* = \max_{k = \pm 1, \pm 2} \alpha_k = 4.$
}\end{example}

\section{Conclusions}
This paper considers local minimizers of semi-algebraic functions. In terms of the tangency variety, we have presented necessary and sufficient conditions for optimality. We have also shown relationships between generalized notions of sharp minima, strong metric subregularity and the \L ojasiewicz gradient inequality; these relations may easily fail when the minimizer in question is not isolated. The constrained case will be studied in future research.

\subsection*{Acknowledgments}
The author wishes to thank the anonymous referees and the Associate Editor for their valuable comments that led to an improved version of the original submission.

\bibliographystyle{siam}

\begin{thebibliography}{99}

\bibitem{Absil2005}
{\sc P.~A. Absil, R.~Mahony, and B.~Andrews}, {\em Convergence of the iterates
  of descent methods for analytic cost functions}, SIAM J. Optim., 16 (2005),
  pp.~531--547.

\bibitem{AragonArtacho2008}
{\sc F.~J. Arag\'on-Artacho and M.~H. Geoffroy}, {\em Characterization of
  metric regularity of subdifferentials}, J. Convex Anal., 15 (2008),
  pp.~365--380.

\bibitem{AragonArtacho2014}
\leavevmode\vrule height 2pt depth -1.6pt width 23pt, {\em Metric subregularity
  of the convex subdifferential in {{B}}anach spaces}, J. Nonlinear Convex
  Anal., 15 (2014), pp.~35--47.

\bibitem{Attouch2010}
{\sc H.~Attouch, J.~Bolte, P.~Redont, and A.~Soubeyran}, {\em Proximal
  alternating minimization and projection methods for nonconvex problems: an
  approach based on the {{K}}urdyka--{{\L}}ojasiewicz inequality}, Math. Oper.
  Res., 35 (2010), pp.~438--457.

\bibitem{Attouch2013}
{\sc H.~Attouch, J.~Bolte, and B.~F. Svaiter}, {\em Convergence of descent
  methods for semi-algebraic and tame problems: proximal algorithms,
  forward-backward splitting, and regularized {{G}}auss--{{S}}eidel methods},
  Math. Program. Ser. A, 137 (2013), pp.~91--129.

\bibitem{BaroneNetto1984}
{\sc A.~Barone-Netto}, {\em Jet-detectable extrema}, Proc. Amer. Math. Soc., 92
  (1984), pp.~604--608.

\bibitem{Bertsekas1999}
{\sc D.~Bertsekas}, {\em Nonlinear programming}, Athena Scientific Optimization
  and Computation Series, Athena Scientific Publishers, Belmont, MA, third~ed.,
  2016.

\bibitem{Bochnak1998}
{\sc J.~Bochnak, M.~Coste, and M.-F. Roy}, {\em Real algebraic geometry},
  vol.~36, Springer, Berlin, 1998.

\bibitem{Bochnak1971}
{\sc J.~Bochnak and S.~{{\L}}ojasiewicz}, {\em A converse of the
  {{K}}uiper--{{K}}uo theorem}, in Proceedings of Liverpool Singularities
  Symposium, I (1969/70), vol.~192 of Lecture Notes in Math., Springer, Berlin,
  1971, pp.~254--261.

\bibitem{Bolte2006}
{\sc J.~Bolte, A.~Daniilidis, and A.~S. Lewis}, {\em The {{\L}}ojasiewicz
  inequality for nonsmooth subanalytic functions with applications to
  subgradient dynamical systems}, SIAM J. Optim., 17 (2006), pp.~1205--1223.

\bibitem{Bolte2007-2}
{\sc J.~Bolte, A.~Daniilidis, A.~S. Lewis, and M.~Shiota}, {\em Clarke
  subgradients of stratifiable functions}, SIAM J. Optim., 18 (2007),
  pp.~556--572.

\bibitem{Bolte2017}
{\sc J.~Bolte, T.~P. Nguyen, J.~Peypouquet, and B.~W. Suter}, {\em From error
  bounds to the complexity of first-order descent methods for convex
  functions}, Math. Program. Ser. A, 165 (2017), pp.~471--507.

\bibitem{Bolte2014}
{\sc J.~Bolte, S.~Sabach, and M.~Teboulle}, {\em Proximal alternating
  linearized minimization for nonconvex and nonsmooth problems}, Math. Program.
  Ser. A, 146 (2014), pp.~459--494.

\bibitem{Bonnas2000}
{\sc J.~F. Bonnans and A.~Shapiro}, {\em Perturbation analysis of optimization
  problems}, Springer, New York, 2000.

\bibitem{Clarke1990}
{\sc F.~H. Clarke}, {\em Optimization and nonsmooth analysis}, Classics in
  Applied Mathematics, SIAM, Philadelphia, PA, 1990.

\bibitem{Dontchev2009}
{\sc A.~L. Dontchev and R.~T. Rockafellar}, {\em Implicit functions and
  solution mappings. {{A}} view from variational analysis}, Springer Monographs
  in Mathematics, Springer, Dordrecht, 2009.

\bibitem{Drusvyatskiy2015}
{\sc D.~Drusvyatskiy and A.~D. Ioffe}, {\em Quadratic growth and critical point
  stability of semi-algebraic functions}, Math. Program. Ser. A, 153 (2015),
  pp.~635--653.

\bibitem{Drusvyatskiy2014}
{\sc D.~Drusvyatskiy, B.~S. Mordukhovich, and T.~T.~A. Nghia}, {\em
  Second-order growth, tilt-stability, and metric regularity of the
  subdifferential}, J. Convex Anal., 21 (2014), pp.~1165--1192.

\bibitem{Duval1989}
{\sc D.~Duval}, {\em Rational puiseux expansions}, Compositio Math., 70 (1989),
  pp.~119--154.

\bibitem{Frankel2015}
{\sc P.~Frankel, G.~Garrigos, and J.~Peypouquet}, {\em Splitting methods with
  variable metric for {{K}}urdyka--{{\L}}ojasiewicz functions and general
  convergence rates}, J. Optim. Theory Appl., 165 (2015), pp.~874--900.

\bibitem{Guo2018}
{\sc F.~Guo and T.~S. Ph\d{a}m}, {\em On types of degenerate critical points of
  real polynomial functions}, J. Symb. Comput., 99 (2020), pp. 108--126.
 
\bibitem{Gwozdziewicz1999}
{\sc J.~Gwo\'zdziewicz}, {\em The {{\L}}ojasiewicz exponent of an analytic
  function at an isolated zero}, Comment. Math. Helv., 74 (1999), pp.~364--375.

\bibitem{HaHV2018}
{\sc H.~V. H\`a}, {\em Computation of the {{\L}}ojasiewicz exponent for a germ
  of a smooth function in two variables}, Studia Math., 240 (2018),
  pp.~161--176.

\bibitem{HaHV2017}
{\sc H.~V. H\`a and T.~S. Ph\d{a}m}, {\em Genericity in polynomial
  optimization}, vol.~3 of Series on Optimization and Its Applications, World
  Scientific, Singapore, 2017.

\bibitem{Hardt1980}
{\sc R.~M. Hardt}, {\em Semi-algebraic local-triviality in semi-algebraic
  mappings}, Amer. J. Math., 102 (1980), pp.~291--302.

\bibitem{Ioffe2016-1}
{\sc A.~D. Ioffe}, {\em Metric regularity--a survey {{P}}art {{I.}}
  {{T}}heory}, J. Aust. Math. Soc., 101 (2016), pp.~188--243.

\bibitem{Ioffe2016-2}
\leavevmode\vrule height 2pt depth -1.6pt width 23pt, {\em Metric regularity--a
  survey {{P}}art {{II.}} {{A}}pplications}, J. Aust. Math. Soc., 101 (2016),
  pp.~376--417.

\bibitem{Klatte2002}
{\sc D.~Klatte and B.~Kummer}, {\em Nonsmooth equations in optimization.
  Regularity, calculus, methods and applications}, vol.~60 of Nonconvex
  Optimization and its Applications, Kluwer Academic Publishers, Dordrecht,
  2002.

\bibitem{Kuo1974}
{\sc T.~C. Kuo}, {\em Computation of {{\L}}ojasiewicz exponent of $f(x, y)$},
  Comment. Math. Helv., 49 (1974), pp.~201--213.

\bibitem{Lasserre2015}
{\sc J.~B. Lasserre}, {\em An introduction to polynomial and semi-algebraic
  optimization}, Cambridge University Press, Cambridge, 2015.

\bibitem{Li2015}
{\sc G.~Li, B.~Mordukhovich, and T.~S. Ph\d{a}m}, {\em New fractional error
  bounds for polynomial systems with applications to {{H}}\"olderian stability
  in optimization and spectral theory of tensors}, Math. Program. Ser. A, 153
  (2015), pp.~333--362.

\bibitem{Li2016}
{\sc G.~Li and T.~K. Pong}, {\em Douglas--{{R}}achford splitting for nonconvex
  optimization with application to nonconvex feasibility problems}, Math.
  Program. Ser. A, 159 (2016), pp.~371--401.

\bibitem{Li2018-2}
\leavevmode\vrule height 2pt depth -1.6pt width 23pt, {\em Calculus of the
  exponent of {{K}}urdyka--{{\L}}ojasiewicz inequality and its applications to
  linear convergence of first-order methods}, Found. Comput. Math., 18 (2018),
  pp.~1199--1232.

\bibitem{Mordukhovich2006}
{\sc B.~S. Mordukhovich}, {\em Variational analysis and generalized
  differentiation, I: Basic theory; II: Applications}, Springer, Berlin, 2006.

\bibitem{Mordukhovich2018}
\leavevmode\vrule height 2pt depth -1.6pt width 23pt, {\em Variational Analysis
  and Applications}, Springer, New York, 2018.

\bibitem{Mordukhovich2015}
{\sc B.~S. Mordukhovich and W.~Ouyang}, {\em Higher-order metric subregularity
  and its applications}, J. Global Optim., 63 (2015), pp.~777--795.

\bibitem{Nguyen2017}
{\sc H.~D. Nguyen, T.~S. Ph\d{a}m, and P.~D. Hoang}, {\em Topological
  invariants of plane curve singularities: {{P}}olar quotients and
  {{\L}}ojasiewicz gradient exponents}, Internat. J. Math., 30 (2019), 1950073, 19 pp.

\bibitem{Nocedal2006}
{\sc J.~Nocedal and S.~J. Wright}, {\em Numerical optimization}, Operations
  Research and Financial Engineering, Springer, New York, 2nd~ed., 2006.

\bibitem{PHAMTS2011}
{\sc T.~S. Ph\d{a}m}, {\em The {{\L}}ojasiewicz exponent of a continuous
  subanalytic function at an isolated zero}, Proc. Amer. Math. Soc., 139
  (2011), pp.~1--9.

\bibitem{Rockafellar1998}
{\sc R.~T. Rockafellar and R.~Wets}, {\em Variational analysis}, vol.~317 of
  Grundlehren Math. Wiss., Springer-Verlag, Berlin, 1998.

\bibitem{Ruszczynski2006}
{\sc A.~Ruszczy\'nski}, {\em Nonlinear optimization}, Princeton University
  Press, Princeton, NJ, 2006.

\bibitem{Dries1996}
{\sc L.~van~den Dries and C.~Miller}, {\em Geometric categories and o-minimal
  structures}, Duke Math. J., 84 (1996), pp.~497--540.

\bibitem{Zheng2015}
{\sc X.~Y. Zheng and K.~F. Ng}, {\em H\"older stable minimizers, tilt
  stability, and {{H}}\"older metric regularity of subdifferentials}, SIAM J.
  Optim., 25 (2015), pp.~416--438.
\end{thebibliography}

\end{document}